\documentclass[reqno]{amsart}

\makeatletter
\@addtoreset{equation}{section}

\makeatother

\newcommand{\Rm}{\mathbb{R}}

\newcommand{\Sm}{\mathbb{S}}
\newcommand{\be}{\[}
\newcommand{\ee}{\]}
\newcommand{\ba}{\[\begin{aligned}}
\newcommand{\ea}{\end{aligned}\]}

\newcommand{\pp}{\partial}
\newcommand{\bv}[1]{\boldsymbol{\mathrm{#1}}}

\usepackage{amsmath}
\usepackage[foot]{amsaddr}
\usepackage{amsthm,amssymb,mathrsfs}
\usepackage{graphicx}
\usepackage{color}

\newtheorem{thm}{Theorem}[section]
\newtheorem{lem}[thm]{Lemma}

\newtheorem{prop}[thm]{Proposition}
\newtheorem{defn}[thm]{Definition}
\theoremstyle{remark}\newtheorem{rmk}[thm]{Remark}
\newtheorem{exa}[thm]{Example}

\title[]{Iterative inversion schemes for the Born series and the reduced inverse Born series}

\author[]{Akari Ishida$^1$}
\address{$^1$Graduate School of Science, University of Hyogo, Himeji 671-2280, Japan}
\author[]{Manabu Machida$^2$}
\address{$^2$Department of Informatics, Faculty of Engineering, Kindai University, Higashi-Hiroshima 739-2116, Japan}
\email{machida@hiro.kindai.ac.jp}

\date{\today}

\begin{document}

\begin{abstract}
Nonlinear inverse problems have complicated landscapes. Hence the calculation with naive iterative schemes (e.g., Gauss-Newton or conjugate gradients) is trapped in local minima. The (first) Born approximation can avoid this trapping but linearization is required. Nonlinear inverse problems can be solved without linearization by means of the inverse Born series. However, the computational cost of its standard recursive implementation grows exponentially when nonlinear terms are taken into account. In this work we revisit a Newton-type iterative scheme to invert the Born series and develop a fast variant. The relation between this fast scheme and the reduced inverse Born series is shown.
\end{abstract}

\maketitle

\section{Introduction}
\label{intro}

The cost function of a nonlinear inverse problem tends to have a complicated landscape and the calculation of a naive iterative method such as the Gauss-Newton or conjugate gradient \cite{Kaltenbacher-Neubauer-Scherzer08} is trapped by a local minimum \cite{Mueller-Siltanen12}. Direct methods such as the Born approximation are free from this trapping issue but these approaches require linarization of nonlinear inverse problems \cite{Isakov06}. By using the inverse Born series, nonlinear inverse problems can be solved without being trapped by local minima and without requiring linearization \cite{Moskow-Schotland19}.

After the inverse Born series was formally derived \cite{Markel-OSullivan-Schotland03,Markel-Schotland07,Panasyuk-Markel-Carney-Schotland06}, its mathematical structure was studied \cite{Moskow-Schotland08} and a recursive numerical algorithm was proposed \cite{Moskow-Schotland09}. In this way, higher-order Born approximations can be implemented with the inverse Born series and nonlinear inverse problems can be solved without linearization with the Born approach. It has been proved that the technique of the inverse Born series can be applied to different partial differential equations. The Calder\'{o}n problem was considered with the inverse Born series \cite{Arridge-Moskow-Schotland12}. Not only the diffusion equation, the inverse Born series was implemented for the radiative transport equation \cite{Machida-Schotland15}. In addition, the inverse Born series was applied to inverse problems for scalar waves \cite{Kilgore-Moskow-Schotland12} and for electromagnetic scattering \cite{Kilgore-Moskow-Schotland17}. The series was developed for discrete inverse problems \cite{Chung-Gilbert-Hoskins-Schotland17}. The technique of the inverse Born series was used to investigate the inversion of the Bremmer series \cite{Shehadeh-Malcolm-Schotland17}. The inverse Born series was extended to the Banach spaces \cite{Bardsley-Vasquez14,Lakhal18} and Sobolev $H^s$ norm \cite{Mahankali-Yang23}. A modified Born series with unconditional convergence was proposed and its inverse series was studied \cite{Abhishek-Bonnet-Moskow20}. The convergence theorem for the inverse Born series was improved after inverse operators were given in an alternative way \cite{Hoskins-Schotland22}. See Ref.~\cite{Moskow-Schotland19} for recent advances. Moreover, a reduced inverse Born series was proposed \cite{Markel-Schotland22}. The inverse-Born-series technique was extended to nonlinear partial differential equations when the nonlinear term in a differential equation was taken into account as perturbation \cite{DeFilippis-Moskow-Schotland23,DeFilippis-Moskow-Schotland24}. Recently, the inverse Born series was considered for the Helmholtz equation \cite{Cakoni-Meng-Zhou25}

The inverse Born series is not a unique way to invert the Born series. An iterative scheme to invert the Born series was proposed by Markel, O'Sullivan, and Schotland \cite{Markel-OSullivan-Schotland03}. They discussed that the iterative scheme produces, if the iteration converges, the same solution as that from the inverse Born series. The formula of this iteration is similar to Newton's method \cite{Dennis-Schnabel96}. Afterward, Bardsley and Vasquez considered the inverse Born series as the iterative method \cite{Bardsley-Vasquez14}. Otherwise, however, the scheme has never been seriously considered.

In this paper we will revisit the iterative scheme to invert the Born series. In addition, we propose a fast iterative scheme. While the computation cost grows exponentially for the recursion algorithm of the inverse Born series, the computation cost stays the same in the fast iterative method when higher-order terms are taken into account. Moreover, the proposed fast algorithm corresponds to the reduced inverse Born series.

The remainder of this paper is organized as follows. The Born series is introduced in Sec.~\ref{sec:born}. In Sec.~\ref{sec:ite}, an iterative formulation for inverting the Born series is introduced and moreover a fast iterative scheme is proposed. Convergences of the iterative schemes are proved in Sec.~\ref{sec:proofs}. In Sec.~\ref{sec:IBS}, the inverse Born series is introduced to invert the Born series. In Sec.~\ref{sec:reduce}, the reduced inverse Born series is described and the relation to the fast iterative scheme is shown. Another reduced inverse Born series is given in Sec.~\ref{sec:reduce_2}. This new reduced inverse Born series is equivalent to the proposed fast iterative scheme. Numerical results obtained by the fast iterative scheme and the inverse Born series are compared in Sec.~\ref{sec:num}. Finally, concluding remarks are given in Sec.~\ref{concl}.

\section{Born series}
\label{sec:born}

Let $\Omega$ be a bounded domain in $\Rm^d$ ($d\ge2$). We introduce $\eta\in L^{\infty}(\omega)$. Here, $\mathop{\mathrm{supp}}\eta\subseteq\omega$, where $\omega$ is a closed subset of $\Omega$ with a finite distance separating $\pp\omega$ and $\pp\Omega$. We consider the time-independent diffusion equation
\begin{equation}
\left\{\begin{aligned}
&
-D_0\Delta u+\alpha_0(1+\eta)u=f,\quad x\in\Omega,
\\
&
D_0\pp_{\nu}u+\beta u=0,\quad x\in\pp\Omega,
\end{aligned}\right.
\label{diffeq1}
\end{equation}
where $D_0,\alpha_0,\beta$ are positive constants, $f$ is a source term, and $\pp_{\nu}$ denotes the directional derivative for the outer unit normal vector $\nu$ on the boundary. Similarly we consider
\begin{equation}
\left\{\begin{aligned}
&
-D_0\Delta u_0+\alpha_0u_0=f,\quad x\in\Omega,
\\
&
D_0\pp_{\nu}u_0+\beta u_0=0,\quad x\in\pp\Omega.
\end{aligned}\right.
\label{diffeq2}
\end{equation}
We introduce the Green's function $G(x,y)$ for (\ref{diffeq2}), which obeys
\be
\left\{\begin{aligned}
&
-D_0\Delta G(x,y)+\alpha_0G(x,y)=\delta(x-y),\quad x\in\Omega,
\\
&
D_0\pp_{\nu}G(x,y)+\beta G(x,y)=0,\quad x\in\pp\Omega,
\end{aligned}\right.
\ee
where $y\in\Omega$ and $\delta(\cdot)$ is the Dirac delta function. Furthermore we let $j,n$ be integers.

By the subtraction of two diffusion equations for $u$ and $u_0$, we arrive at the following integral representaion:
\begin{equation}
u=u_0+Tu,
\label{identity}
\end{equation}
where $T$ is an operator such that
\be
(Tu)(x)=-\alpha_0\int_{\Omega}G(x,y)\eta(y)u(y)\,dy.
\ee
We assume $\|T\|<1$. Then from (\ref{identity}), we can write
\be
u=\left(I-T\right)^{-1}u_0=u_0+u_1+\cdots,
\ee
where $I$ is the identity and
\be
u_j=T^ju_0,\quad j\ge1.
\ee
Hereafter, $I$ denotes the identity on the corresponding space. We have the following recursive structure:
\be
u_j=-\alpha_0\int_{\Omega}G(x,y)\eta(y)u_{j-1}(y)\,dy,\quad j\ge1.
\ee

\begin{rmk}
If $f\in L^2(\Omega)$ and $u\in H^1(\Omega)$ is a weak solution of (\ref{diffeq1}), then $u\in H_{\rm loc}^2(\Omega)$ \cite{Evans10}. In this case, $T\colon H_{\rm loc}^2\to H_{\rm loc}^2$.
\end{rmk}

Let us set
\be
\phi(x)=u_0(x)-u(x),\quad x\in\pp\Omega.
\ee
Then the Born series can be written as
\begin{equation}
\phi=K\eta=K_1\eta+K_2\eta^{\otimes2}+K_3\eta^{\otimes3}+\cdots,
\label{Born}
\end{equation}
where $\otimes$ denotes tensor product and we used the notation:
\be
\eta^{\otimes n}=\overbrace{\eta\otimes\cdots\otimes\eta}^n,\quad n\ge1.
\ee
Multilinear operators $K_j\colon L^{\infty}(\omega^j)\to L^p(\pp\Omega)$ ($j\ge 1$, $1\le p\le\infty$) are given by
\ba
\left(K_1\eta\right)(x)&=\alpha_0\int_{\Omega}G(x,y)\eta(y)u_0(y)\,dy,
\quad x\in\pp\Omega,
\\
\left(K_j\eta^{\otimes j}\right)(x)
&=
(-1)^{j+1}\alpha_0^j\int_{\Omega}\cdots\int_{\Omega}G(x,y_1)\eta(y_1)G(y_1,y_2)\cdots
G(y_{j-1},y_j)
\\
&\times
\eta(y_j)u_0(y_j)\,dy_1\cdots dy_j,\quad x\in\pp\Omega,\quad j\ge2.
\ea
See Lemma \ref{Kjpq} for operators $K_j$. We will consider the inverse problem of determining $\eta$ from $\phi$.

\begin{rmk}
Multilinear operators $K_j\colon L^p(\omega^j)\to L^p(\pp\Omega)$ ($j\ge 1$, $2\le p\le\infty$) were considered in \cite{Moskow-Schotland08}. It is possible to introduce linear operators $K_j\colon (L^p(\omega))^{\otimes j}\to L^p(\pp\Omega)$ \cite{Bardsley-Vasquez14}.
\end{rmk}

\section{Iterative schemes}
\label{sec:ite}

We first consider the linear inverse problem of $K_1\xi=\phi$. The operator $K_1$ is bounded. Let $K_1^+$ be the Moore-Penrose pseudoinverse of $K_1$. We introduce the regularization parameter $\alpha=\alpha(\delta,\phi^{\delta})>0$ ($0<\delta<\infty$, $\phi^{\delta}\in L^p(\pp\Omega)$) such that
\be
\mathop{\mathrm{lim\,sup}}_{\delta\to0}\left\{\alpha(\delta,\phi^{\delta});\;
\phi^{\delta}\in L^p(\pp\Omega),\;\|\phi^{\delta}-\phi\|_{L^p(\pp\Omega)}\le\delta\right\}=0.
\ee
Let $\mathcal{K}_1=\mathcal{K}_{1,\alpha}\colon L^p(\pp\Omega)\to L^{\infty}(\omega)$ be a regularized pseudoinverse or regularization operator, i.e., there exists $\alpha=\alpha(\delta,\phi^{\delta})$ such that \cite{Engl-Hanke-Neubauer00}
\be
\lim_{\delta\to0}\sup\left\{\|\mathcal{K}_{1,\alpha(\delta,\phi^{\delta})}\phi^{\delta}-K_1^+\phi\|_{L^{\infty}(\omega)};\;\phi^{\delta}\in L^p(\pp\Omega),\;\|\phi^{\delta}-\phi\|_{L^p(\pp\Omega)}\le\delta\right\}=0
\ee
for all $\phi\in\mathcal{D}(K_1^+)$, where $\mathcal{D}(K_1^+)$ is the domain of $K_1^+$. We note that $\mathcal{K}_1K_1=I$ ($\alpha=0$) if $K_1$ is invertible. But $K_1$ is not invertible in general and this relation $\mathcal{K}_1K_1=I$ does not hold.

We obtain the following relation using $\mathcal{K}_1$:
\ba
\eta_{\rm proj}
&=
\mathcal{K}_1\left(\phi-K\eta+K_1\eta\right)
\\
&=
\eta_{\rm proj}-\mathcal{K}_1\left(K\eta-\phi\right),
\ea
where
\begin{equation}
\eta_{\rm proj}=\mathcal{K}_1K_1\eta.
\label{etaproj}
\end{equation}
This $\eta_{\rm proj}$ is the projection of $\eta$ onto the subspace generated by the eigenmodes with large singular values. In the inverse Born series (see Sec.~\ref{sec:IBS} below), $\eta_{\rm proj}$ is the best we can hope to reconstruct \cite{Moskow-Schotland09}.

From the above-mentioned relation, the iteration below was proposed in \cite{Markel-OSullivan-Schotland03}.
\begin{equation}
\eta^{(n+1)}=
\eta^{(n)}-\mathcal{K}_1\left(K\eta^{(n)}-\phi\right),
\quad n\ge0,
\label{iterative1}
\end{equation}
with $\eta^{(0)}\equiv0$. Since the above-mentioned Born series is obtained by the perturbation to the known absorption coefficient for the background medium, we set $\eta^{(0)}=0$. In \cite{Bardsley-Vasquez14}, nonzero initial guess $\eta^{(0)}$ was considered.

We write
\begin{equation}
\|\mathcal{K}_1\phi\|_{L^{\infty}(\omega)}=h,
\label{normish}
\end{equation}
where  $h\in(0,\infty)$ is a constant.

\begin{thm}
We assume that there exists a constant $b\in(1,\infty)$ such that $\|I-\mathcal{K}_1K\|\le1-1/b$. Let $(\eta^{(n)})_{n=0}^{\infty}$ be the sequence constructed by (\ref{iterative1}) with $\eta^{(0)}\equiv0$. Then the iteration in (\ref{iterative1}) admits a unique fixed-point $\eta^*$. Furthermore,
\be
\left\|\eta^{(n)}-\eta^*\right\|_{L^{\infty}(\omega)}\le 
b\left\|\eta^{(n+1)}-\eta^{(n)}\right\|_{L^{\infty}(\omega)},\quad n\ge0.
\ee
\label{mainthm1}
\end{thm}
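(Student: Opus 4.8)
The plan is to recognize the iteration $(\ref{iterative1})$ as a Picard iteration for an affine-type contraction and to invoke the Banach fixed-point theorem on the Banach space $L^{\infty}(\omega)$. Since $\mathcal{K}_1$ is linear, $(\ref{iterative1})$ can be written as $\eta^{(n+1)}=\Phi(\eta^{(n)})$ with
\be
\Phi(\eta)=\left(I-\mathcal{K}_1K\right)\eta+\mathcal{K}_1\phi,
\ee
where $\mathcal{K}_1K$ denotes the nonlinear map $\eta\mapsto\mathcal{K}_1(K\eta)$ obtained by composing the regularized pseudoinverse with the Born series $(\ref{Born})$. Because $\mathcal{K}_1\colon L^p(\pp\Omega)\to L^{\infty}(\omega)$ and $K$ maps into $L^p(\pp\Omega)$, the map $\Phi$ takes $L^{\infty}(\omega)$ into itself; moreover $\Phi(0)=\mathcal{K}_1\phi$, so that $\eta^{(1)}=\mathcal{K}_1\phi$ and $\|\eta^{(1)}\|_{L^{\infty}(\omega)}=h$ by $(\ref{normish})$.

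Next I would read the hypothesis $\|I-\mathcal{K}_1K\|\le1-1/b$ as the statement that the nonlinear operator $\eta\mapsto(I-\mathcal{K}_1K)\eta$ is Lipschitz on $L^{\infty}(\omega)$ with constant $q:=1-1/b\in(0,1)$. Then for all $\eta,\tilde{\eta}\in L^{\infty}(\omega)$,
\ba
\left\|\Phi(\eta)-\Phi(\tilde{\eta})\right\|_{L^{\infty}(\omega)}
&=
\left\|\left(I-\mathcal{K}_1K\right)\eta-\left(I-\mathcal{K}_1K\right)\tilde{\eta}\right\|_{L^{\infty}(\omega)}
\\
&\le
q\left\|\eta-\tilde{\eta}\right\|_{L^{\infty}(\omega)},
\ea
so $\Phi$ is a contraction. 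Since $L^{\infty}(\omega)$ is complete, the contraction mapping principle yields a unique fixed point $\eta^*=\Phi(\eta^*)$ --- equivalently, a unique fixed point of $(\ref{iterative1})$ --- and the Picard sequence started at $\eta^{(0)}\equiv0$ converges to $\eta^*$ in $L^{\infty}(\omega)$.

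For the quantitative bound I would use the standard a posteriori estimate for Picard iterates: from $\|\eta^{(m+1)}-\eta^{(m)}\|_{L^{\infty}(\omega)}\le q\|\eta^{(m)}-\eta^{(m-1)}\|_{L^{\infty}(\omega)}$ and the triangle inequality,
\be
\left\|\eta^{(n+k)}-\eta^{(n)}\right\|_{L^{\infty}(\omega)}
\le\sum_{m=0}^{k-1}q^{m}\left\|\eta^{(n+1)}-\eta^{(n)}\right\|_{L^{\infty}(\omega)}
\le\frac{1}{1-q}\left\|\eta^{(n+1)}-\eta^{(n)}\right\|_{L^{\infty}(\omega)};
\ee
letting $k\to\infty$ and using $1-q=1/b$ gives $\|\eta^{(n)}-\eta^*\|_{L^{\infty}(\omega)}\le b\|\eta^{(n+1)}-\eta^{(n)}\|_{L^{\infty}(\omega)}$, which is the assertion. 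The step that needs the most care is not the fixed-point argument, which is routine once the hypothesis is granted, but the justification that $\Phi$ is genuinely a self-map of a complete set on which the Lipschitz bound is valid: the Born series $K\eta$ must be convergent along the orbit, and if the bounds of Lemma \ref{Kjpq} only guarantee convergence of $(\ref{Born})$ on a closed ball $\{\|\eta\|_{L^{\infty}(\omega)}\le\rho\}$, then one must verify that $\Phi$ maps this ball into itself. This follows once $q\rho+h\le\rho$, i.e.\ $\rho\ge hb$ (here the constant $h$ enters), and since a closed ball in the Banach space $L^{\infty}(\omega)$ is itself a complete metric space the contraction argument applies verbatim.
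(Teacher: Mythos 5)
Your proposal is correct and follows essentially the same route as the paper: interpret the hypothesis as a contraction (Lipschitz) bound for the map $\eta\mapsto\eta-\mathcal{K}_1(K\eta-\phi)$ on $L^{\infty}(\omega)$, apply Banach's fixed-point theorem, and obtain the constant $b$ from the telescoping sum and the geometric series $\sum_k(1-1/b)^k=b$. Your closing remark about verifying that the Born series converges along the orbit (so that the iteration is a genuine self-map of a complete set) is a point the paper's proof passes over silently, but it does not change the argument.
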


We introduce the following fast iterative scheme:
\begin{equation}
\widetilde{\eta}^{(n+1)}=
\widetilde{\eta}^{(n)}-\mathcal{K}_1K_2\left(\widetilde{\eta}^{(n)}-\widetilde{\eta}^{(n-1)}\right)\otimes\widetilde{\eta}^{(1)},
\quad n\ge1,
\label{iterativeRed1}
\end{equation}
where $\widetilde{\eta}^{(0)}=\eta^{(0)}\equiv0$, $\widetilde{\eta}^{(1)}=\eta^{(1)}=\mathcal{K}_1\phi$.

\begin{thm}
We assume that there exists a constant $b\in(1,\infty)$ such that $\|\mathcal{K}_1K_2\|\le (1-1/b)/h$. Let $(\widetilde{\eta}^{(n)})_{n=0}^{\infty}$ be the sequence constructed by (\ref{iterativeRed1}) with $\widetilde{\eta}^{(0)}\equiv0$, $\widetilde{\eta}^{(1)}=\mathcal{K}_1\phi$. Then the iteration in (\ref{iterativeRed1}) admits a unique fixed-point $\widetilde{\eta}^*$. Furthermore,
\be
\left\|\widetilde{\eta}^{(n)}-\widetilde{\eta}^*\right\|_{L^{\infty}(\omega)}\le
b\left\|\widetilde{\eta}^{(n+1)}-\widetilde{\eta}^{(n)}\right\|_{L^{\infty}(\omega)},\quad n\ge0.
\ee
\label{mainthm2}
\end{thm}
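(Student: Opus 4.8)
The plan is to observe that the recursion (\ref{iterativeRed1}), which nominally involves the two previous iterates, is in fact an ordinary one-step fixed-point iteration for an affine map, so that the argument then runs exactly as for Theorem \ref{mainthm1}. First I would introduce the increments $\delta^{(n)}=\widetilde{\eta}^{(n)}-\widetilde{\eta}^{(n-1)}$, so that $\delta^{(1)}=\widetilde{\eta}^{(1)}=\mathcal{K}_1\phi$, and subtract consecutive instances of (\ref{iterativeRed1}) to obtain
\be
\delta^{(n+1)}=-\mathcal{K}_1K_2\,\delta^{(n)}\otimes\widetilde{\eta}^{(1)},\qquad n\ge1.
\ee
Summing over $k$ and using $\widetilde{\eta}^{(0)}\equiv0$, so that $\sum_{k=1}^{n}\delta^{(k)}=\widetilde{\eta}^{(n)}$, together with the linearity of the map $\xi\mapsto\mathcal{K}_1K_2\,\xi\otimes\widetilde{\eta}^{(1)}$ in its first argument, I obtain the closed form
\be
\widetilde{\eta}^{(n+1)}=G(\widetilde{\eta}^{(n)}),\qquad G(\xi)=\mathcal{K}_1\phi-\mathcal{K}_1K_2\,\xi\otimes\mathcal{K}_1\phi,\qquad n\ge0,
\ee
the case $n=0$ being checked directly from (\ref{iterativeRed1}). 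Hence the fast scheme is precisely the fixed-point iteration of $G$ on $L^{\infty}(\omega)$ started from $\widetilde{\eta}^{(0)}\equiv0$, and $G$ maps $L^{\infty}(\omega)$ into itself by the stated mapping properties of $\mathcal{K}_1$ and $K_2$.

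Next I would verify that $G$ is a contraction. Since $G$ is affine, for any $\xi,\xi'\in L^{\infty}(\omega)$,
\be
\|G(\xi)-G(\xi')\|_{L^{\infty}(\omega)}=\|\mathcal{K}_1K_2\,(\xi-\xi')\otimes\widetilde{\eta}^{(1)}\|_{L^{\infty}(\omega)}\le\|\mathcal{K}_1K_2\|\,\|\widetilde{\eta}^{(1)}\|_{L^{\infty}(\omega)}\,\|\xi-\xi'\|_{L^{\infty}(\omega)},
\ee
and by (\ref{normish}) we have $\|\widetilde{\eta}^{(1)}\|_{L^{\infty}(\omega)}=\|\mathcal{K}_1\phi\|_{L^{\infty}(\omega)}=h$, so the Lipschitz constant of $G$ is at most $h\|\mathcal{K}_1K_2\|\le1-1/b<1$ by hypothesis. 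The Banach fixed-point theorem on the Banach space $L^{\infty}(\omega)$ then yields a unique fixed point $\widetilde{\eta}^*$ of $G$, equivalently of (\ref{iterativeRed1}), with $\widetilde{\eta}^{(n)}\to\widetilde{\eta}^*$.

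Finally, for the a posteriori bound I would run the standard estimate with the contraction factor $q=1-1/b$ furnished above: from $\widetilde{\eta}^{(n)}-\widetilde{\eta}^*=(\widetilde{\eta}^{(n)}-\widetilde{\eta}^{(n+1)})+(G(\widetilde{\eta}^{(n)})-G(\widetilde{\eta}^*))$ and the contraction property one obtains $(1-q)\|\widetilde{\eta}^{(n)}-\widetilde{\eta}^*\|_{L^{\infty}(\omega)}\le\|\widetilde{\eta}^{(n+1)}-\widetilde{\eta}^{(n)}\|_{L^{\infty}(\omega)}$, which is the asserted inequality since $1/(1-q)=b$. The only step that is not routine is the first: recognizing that (\ref{iterativeRed1}) telescopes, through $\sum_{k=1}^n\delta^{(k)}=\widetilde{\eta}^{(n)}$, into the one-step affine iteration for $G$. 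Once that reformulation is available, the contraction computation and the way the constant $b$ propagates through it are identical to those in the proof of Theorem \ref{mainthm1}.
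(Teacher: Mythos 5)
Your proof is correct, and its computational core coincides with the paper's: both pass to the increments $\delta^{(n)}=\widetilde{\eta}^{(n)}-\widetilde{\eta}^{(n-1)}$, observe that they obey the linear recursion $\delta^{(n+1)}=-\mathcal{K}_1K_2\,\delta^{(n)}\otimes\widetilde{\eta}^{(1)}$, and extract the contraction factor $h\|\mathcal{K}_1K_2\|\le 1-1/b$ from (\ref{normish}) and the hypothesis. Where you genuinely diverge is in what you do with that recursion. The paper stays at the level of increments: geometric decay of the $\delta^{(n)}$ makes the partial sums $\widetilde{\eta}^{(n)}=\sum_{j\le n}\delta^{(j)}$ Cauchy, the limit $\widetilde{\eta}^*$ exists, and the a posteriori bound is obtained ``similarly to Theorem \ref{mainthm1}'' by summing the tail. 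You instead re-sum the recursion, using linearity in the first tensor factor, to recognize the scheme as the one-step iteration of the affine map $G(\xi)=\mathcal{K}_1\phi-\mathcal{K}_1K_2\,\xi\otimes\mathcal{K}_1\phi$. This buys you something the paper leaves implicit: for a two-step recursion the phrase ``unique fixed point'' has no intrinsic meaning (any constant sequence is stationary under (\ref{iterativeRed1}), since the correction term vanishes when two consecutive iterates agree), whereas your reformulation exhibits the concrete fixed-point equation $\xi=\mathcal{K}_1\phi-\mathcal{K}_1K_2\,\xi\otimes\mathcal{K}_1\phi$, so that existence, uniqueness, and the constant $b=1/(1-q)$ in the final inequality all come directly from the standard Banach estimates. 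Both routes yield the same bound for all $n\ge0$; yours is the cleaner justification of the uniqueness assertion in the statement.
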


\section{Proofs of Theorems \ref{mainthm1} and \ref{mainthm2}}
\label{sec:proofs}

\subsection{Proof of Therem \ref{mainthm1}}

For nonzero integers $k,l$,
\ba
\left\|\eta^{(k+1)}-\eta^{(l+1)}\right\|_{L^{\infty}(\omega)}
&=
\left\|\eta^{(k)}-\eta^{(l)}-\mathcal{K}_1K\left(\eta^{(k)}-\eta^{(l)}\right)\right\|_{L^{\infty}(\omega)}
\\
&\le
\left\|I-\mathcal{K}_1K\right\|\left\|\eta^{(k)}-\eta^{(l)}\right\|_{L^{\infty}(\omega)}
\\
&\le
\left(1-\frac{1}{b}\right)\left\|\eta^{(k)}-\eta^{(l)}\right\|_{L^{\infty}(\omega).}
\ea
That is, the mapping (\ref{iterative1}) from $L^{\infty}(\omega)$ to $L^{\infty}(\omega)$ is a contraction mapping and the sequence $(\eta^{(n)})_{n=0}^{\infty}$ is a Cauchy sequence. Since $L^{\infty}(\omega)$ is complete, the sequence admits a limit $\eta^*\in L^{\infty}(\omega)$. By Banach's contraction mapping theorem, there exists a unique $\eta^*$ such that
\be
\mathcal{K}_1\left(K\eta^*-\phi\right)=0.
\ee

In particular, we have (see (A.51) in \cite{Kirsch21})
\be
\left\|\eta^{(n+1)}-\eta^*\right\|_{L^{\infty}(\omega)}
\le\left(1-\frac{1}{b}\right)\left\|\eta^{(n)}-\eta^*\right\|_{L^{\infty}(\omega)}.
\ee
Since
\be
\eta^{(n)}-\eta^*=
\left(\eta^{(n)}-\eta^{(n+1)}\right)+\left(\eta^{(n+1)}-\eta^{(n+2)}\right)+\cdots
=\sum_{k=n}^{\infty}\left(\eta^{(k)}-\eta^{(k+1)}\right),
\ee
and hence
\be
\left\|\eta^{(n)}-\eta^*\right\|_{L^{\infty}(\omega)}
\le
\sum_{k=n}^{\infty}\left\|\eta^{(k)}-\eta^{(k+1)}\right\|_{L^{\infty}(\omega)}
\le
\left\|\eta^{(n+1)}-\eta^{(n)}\right\|_{L^{\infty}(\omega)}
\sum_{k=n}^{\infty}\left(1-\frac{1}{b}\right)^{k-n},
\ee
we obtain
\be
\left\|\eta^{(n)}-\eta^*\right\|_{L^{\infty}(\omega)}\le 
b\left\|\eta^{(n+1)}-\eta^{(n)}\right\|_{L^{\infty}(\omega)},\quad n\ge0.
\ee
Thus, the proof is complete.

\subsection{Proof of Theorem \ref{mainthm2}}

By using $\xi^{(n)}=\widetilde{\eta}^{(n)}-\widetilde{\eta}^{(n-1)}$ ($n\ge1$), we can express (\ref{iterativeRed1}) as
\be
\xi^{(n+1)}=-S\xi^{(n)},\quad n\ge1,
\ee
where $S\colon L^{\infty}(\omega)\to L^{\infty}(\omega)$ is defined as
\be
S\xi=\mathcal{K}_1K_2\xi\otimes\widetilde{\eta}^{(1)}
\ee
for $\xi\in L^{\infty}(\omega)$. Since $\|\mathcal{K}_1K_2\|<1/h$ by assumption, $S$ is a contraction mapping. We have
\be
\widetilde{\eta}^{(n)}=\sum_{j=1}^n\xi^{(j)}.
\ee
This equation implies that $(\widetilde{\eta}^{(n)})_{n=0}^{\infty}$ is a Cauchy sequence. Since $L^{\infty}(\omega)$ is complete, the sequence admits a limit $\widetilde{\eta}^*\in L^{\infty}(\omega)$.

By an argument similar to the proof of Theorem \ref{mainthm1}, we can show that $\|\widetilde{\eta}^{(n)}-\widetilde{\eta}^*\|_{L^{\infty}(\omega)}\le b\|\widetilde{\eta}^{(n+1)}-\widetilde{\eta}^{(n)}\|_{L^{\infty}(\omega)}$ for $n\ge0$.

\section{Inverse Born series}
\label{sec:IBS}

The Born series (\ref{Born}) expresses $\phi$ in a series form. To the contrary, we can write $\eta$ in a series form:
\begin{equation}
\eta=\eta_1+\eta_2+\eta_3+\eta_4+\cdots.
\label{invBorn}
\end{equation}
We call (\ref{invBorn}) the inverse Born series \cite{Markel-OSullivan-Schotland03,Moskow-Schotland08}. Here, the first term is given by
\be
\eta_1=\mathcal{K}_1\phi=\eta^{(1)}.
\ee
The $j$th term $\eta_j$ ($j\ge2$) in the inverse Born series can be written as
\be
\eta_j=\mathcal{K}_j\phi^{\otimes j},\quad j\ge2,
\ee
where operators $\mathcal{K}_j$ are recursively introduced as
\begin{equation}
\mathcal{K}_j=
-\left(\sum_{m=1}^{j-1}\mathcal{K}_m\sum_{i_1+\cdots+i_m=j}K_{i_1}\otimes\cdots\otimes K_{i_m}\right)\mathcal{K}_1^{\otimes j},
\quad j\ge2.
\label{arriveat}
\end{equation}
For example, we have
\begin{equation}
\mathcal{K}_2=-\mathcal{K}_1K_2\mathcal{K}_1\otimes\mathcal{K}_1,
\label{p6eq1}
\end{equation}
and
\be
\mathcal{K}_3=-\left(\mathcal{K}_2K_1\otimes K_2+\mathcal{K}_2K_2\otimes K_1+\mathcal{K}_1K_3\right)\mathcal{K}_1^{\otimes3}.
\ee
The number of ordered partitions (number of compositions) of the integer $j$ into $m$ pairs is $\begin{pmatrix}j-1\\m-1\end{pmatrix}$, i.e., the number of terms for $\mathcal{K}_j$ grows exponentially as $j$ increases. We note that
\ba
\|\eta_j\|_{L^{\infty}(\omega)}
&\le
\left\|\sum_{m=1}^{j-1}\mathcal{K}_m\sum_{i_1+\cdots+i_m=j}K_{i_1}\otimes\cdots\otimes K_{i_m}\right\|h^j
\\
&=
O(h^j).
\ea
Small $h$ is a necessary condition for the inverse Born series to converge \cite{Moskow-Schotland08}.

Since $\mathcal{K}_1K_1\colon L^{\infty}(\omega)\to L^{\infty}(\omega)$ is approximately the identity $I$, we can write
\be
\mathcal{K}_1K_1=I+\epsilon V,
\ee
where $\epsilon>0$ is small and $V$ is an operator such that $\|V\|=1$. We have
\be
\|\mathcal{K}_1K_1-I\|=\epsilon.
\ee

\begin{defn}
For two operators $\mathcal{A},\mathcal{B}\colon L^{\infty}(\omega)\to L^{\infty}(\omega)$, we write
\be
\mathcal{A}\simeq\mathcal{B}
\ee
if $\|\mathcal{A}-\mathcal{B}\|=O(\epsilon)$. Similarly, we write $\mathcal{A}\eta\simeq\mathcal{B}\eta$ ($\eta\in L^{\infty}(\omega)$) if $\mathcal{A}\simeq\mathcal{B}$.
\end{defn}

\begin{exa}
$\mathcal{K}_1K_1\simeq I$.
\end{exa}

\begin{exa}
Suppose $\eta'=\mathcal{K}_1K_1\eta$. Then we can write $\eta'\simeq\eta$.
\end{exa}

\begin{lem}
\be
\left\|K_j\eta^{\otimes j}\right\|_{L^p(\pp\Omega)}\le
C_{j,p}\|\eta\|_{L^{\infty}(\omega)}^j,
\ee
where $C_{j,p}$ is a positive constant which depends on $j,p$ ($j\ge1$, $p\ge1$).
\label{Kjpq}
\end{lem}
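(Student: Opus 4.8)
The plan is to reduce the whole estimate to two standard properties of the background Green's function $G$. First, since $\mathrm{supp}\,\eta\subseteq\omega$ and a positive distance separates $\omega$ from $\partial\Omega$, the kernel $G(x,y_1)$ that carries the observation point $x$ is evaluated only for $(x,y_1)\in\partial\Omega\times\omega$, i.e.\ off the diagonal, where $G$ is smooth; hence I will record that $M_0:=\sup_{(x,y)\in\partial\Omega\times\omega}|G(x,y)|<\infty$. Second, I will use that $G\ge0$ (maximum principle for $-D_0\Delta+\alpha_0$ with the Robin condition, $\beta>0$) and that integrating the defining equation over $\Omega$, together with $D_0\partial_\nu G=-\beta G$ on $\partial\Omega$, gives $\beta\int_{\partial\Omega}G(x,y)\,dS(y)+\alpha_0\int_\Omega G(x,y)\,dy=1$, whence
\[
\kappa:=\sup_{x\in\Omega}\int_\Omega G(x,y)\,dy\le\frac1{\alpha_0}<\infty .
\]
(Alternatively one may simply invoke the pointwise bound $G(x,y)\le C_d|x-y|^{2-d}$ for $d\ge3$, resp.\ $C_2|\log|x-y||$ for $d=2$, and $\sup_x\int_\Omega|x-y|^{2-d}\,dy<\infty$.) I also use $u_0\in L^\infty(\omega)$, which holds under the standing assumptions on $f$ (e.g.\ $f\in L^p(\Omega)$ with $p>d/2$).

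Granting these, the proof is a telescoping integration. Fixing $x\in\partial\Omega$ and $j\ge2$, in the $j$-fold integral defining $(K_j\eta^{\otimes j})(x)$ I would take absolute values and pull out $|G(x,y_1)|\le M_0$, $|u_0(y_j)|\le\|u_0\|_{L^\infty(\omega)}$, and the $j$ factors $|\eta(y_i)|\le\|\eta\|_{L^\infty(\omega)}$, which leaves
\[
\alpha_0^j M_0\,\|u_0\|_{L^\infty(\omega)}\,\|\eta\|_{L^\infty(\omega)}^j\int_\omega\!\!\cdots\!\!\int_\omega\prod_{i=1}^{j-1}|G(y_i,y_{i+1})|\,dy_1\cdots dy_j .
\]
Then I integrate successively in $y_j,y_{j-1},\dots,y_2$: at each step the only factor containing the current variable is a single $|G(\cdot,\cdot)|$ and $\int_\omega|G(y_{i-1},y_i)|\,dy_i\le\kappa$, so after these $j-1$ integrations the multiple integral is at most $\kappa^{j-1}\int_\omega dy_1=\kappa^{j-1}|\omega|$; Tonelli's theorem justifies the interchanges since the integrand is nonnegative and the bound is finite. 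This gives $|(K_j\eta^{\otimes j})(x)|\le\alpha_0^j M_0\|u_0\|_{L^\infty(\omega)}|\omega|\kappa^{j-1}\|\eta\|_{L^\infty(\omega)}^j$ uniformly in $x\in\partial\Omega$; the case $j=1$ is the same estimate with an empty product, i.e.\ $\kappa^0=1$. Taking $L^p(\partial\Omega)$ norms (with the convention $|\partial\Omega|^{1/\infty}=1$) then yields the claim with $C_{j,p}=|\partial\Omega|^{1/p}\alpha_0^j M_0\|u_0\|_{L^\infty(\omega)}|\omega|\kappa^{j-1}$; with $\kappa=1/\alpha_0$ this constant is in fact independent of $j$, although the statement only requires its finiteness.

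The only genuinely non-routine point is the pair of Green's-function facts above: the off-diagonal boundedness of $G$ on $\partial\Omega\times\omega$ — which is precisely where the geometric separation of $\omega$ from $\partial\Omega$ (hence of $\mathrm{supp}\,\eta$ from the observation set) is used — and the uniform $L^1$ bound that absorbs the diagonal singularities of the $j-1$ internal kernels. Once these are in hand the lemma follows by the mechanical telescoping above, so the write-up would mostly consist of stating or citing those two properties of $G$.
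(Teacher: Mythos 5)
Your proposal is correct and follows essentially the same route as the paper: pull out the $\|\eta\|_{L^{\infty}(\omega)}$ factors, treat the outer kernel $G(x,y_1)$ (the one touching $\pp\Omega$) separately, and telescope the $j-1$ internal kernels using a uniform $L^1$-in-one-variable bound on $G$ over $\omega$. The only differences are cosmetic choices of norms in the constant (you use $\sup_{\pp\Omega\times\omega}|G|$, $\|u_0\|_{L^{\infty}(\omega)}$ and $|\omega|$, the paper uses $\sup_{y\in\omega}\|G(\cdot,y)\|_{L^p(\pp\Omega)}$, $\sup_{y}\|G(\cdot,y)\|_{L^1(\omega)}$ and $\|u_0\|_{L^1(\omega)}$), plus your welcome explicit justification of the two Green's-function facts that the paper takes for granted.
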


\begin{proof}
We have
\ba
&
\left\|K_j\eta^{\otimes j}\right\|_{L^p(\pp\Omega)}^p
\\
&=
\alpha_0^{jp}\int_{\pp\Omega}
\left|\int_{\Omega}\cdots\int_{\Omega}G(x,y_1)\eta(y_1)G(y_1,y_2)\cdots
G(y_{j-1},y_j)\eta(y_j)u_0(y_j)\,dy_1\cdots dy_j\right|^p\,dS(x)
\\
&\le
\alpha_0^{jp}\|\eta\|_{L^{\infty}(\omega)}^p\int_{\pp\Omega}\left(
\int_{\omega}\left|\int_{\omega}\cdots\int_{\omega}G(x,y_1)G(y_1,y_2)\cdots
G(y_{j-1},y_j)\eta(y_j)u_0(y_j)\,dy_2\cdots dy_j\right|\,dy_1\right)^p\,dS(x)
\\
&\le
\alpha_0^{jp}\|\eta\|_{L^{\infty}(\omega)}^{jp}\int_{\pp\Omega}\left|
\int_{\omega}\int_{\omega}\cdots\int_{\omega}G(x,y_1)G(y_1,y_2)\cdots
G(y_{j-1},y_j)u_0(y_j)\,dy_1dy_2\cdots dy_j\right|^p\,dS(x)
\\
&\le
\alpha_0^{jp}\|\eta\|_{L^{\infty}(\omega)}^{jp}
\sup_{y\in\omega}\left(\int_{\pp\Omega}|G(x,y)|^p\,dS(x)\right)\left|
\int_{\omega}\int_{\omega}\cdots\int_{\omega}G(y_1,y_2)\cdots
G(y_{j-1},y_j)u_0(y_j)\,dy_1\cdots dy_j\right|^p
\\
&\le
\alpha_0^{jp}\|\eta\|_{L^{\infty}(\omega)}^{jp}
\sup_{y\in\omega}\left(\int_{\pp\Omega}|G(x,y)|^p\,dS(x)\right)
\\
&\times
\Biggl|\left(\sup_{y_2\in\omega}\|G(\cdot,y_2)\|_{L^1(\omega)}\right)
\int_{\omega}\cdots\int_{\omega}G(y_2,y_3)\cdots
G(y_{j-1},y_j)u_0(y_j)\,dy_2\cdots dy_j\Biggr|^p
\\
&\le
\alpha_0^{jp}\|\eta\|_{L^{\infty}(\omega)}^{jp}
\sup_{y\in\omega}\left(\int_{\pp\Omega}|G(x,y)|^p\,dS(x)\right)
\Biggl|\left(\sup_{y_2\in\omega}\|G(\cdot,y_2)\|_{L^1(\omega)}\right)^{j-1}
\int_{\omega}u_0(y_j)\,dy_j\Biggr|^p
\\
&\le
C_{j,p}^p\|\eta\|_{L^{\infty}(\omega)}^{jp},
\ea
where
\be
C_{j,p}=
\alpha_0^j\sup_{y_1\in\omega}\|G(\cdot,y_1)\|_{L^p(\pp\Omega)}
\left(\sup_{y_2\in\omega}\|G(\cdot,y_2)\|_{L^1(\omega)}\right)^{j-1}
\|u_0\|_{L^1(\omega)}.
\ee
\end{proof}

Let us consider the relation between the $n$th iteration $\eta^{(n)}$ and terms $\eta_1,\dots,\eta_n$ in the inverse Born series. When $n=1$, we have $\eta^{(1)}=\eta_1$. When $n=2$, we have
\ba
\eta^{(2)}
&=
\eta^{(1)}+\mathcal{K}_1\left(\phi-K\eta_1\right)
\\
&=
\eta^{(1)}+\left(I-\mathcal{K}_1K_1\right)\eta_1+\mathcal{K}_2\phi\otimes\phi
-\mathcal{K}_1\sum_{m=3}^{\infty}K_m\eta_1^{\otimes m}
\\
&\simeq
\eta_1+\eta_2-\mathcal{K}_1\sum_{m=3}^{\infty}K_m\eta_1^{\otimes m}.
\ea
Hence $\eta^{(2)}\approx\eta_1+\eta_2$ if terms of order $O(h^3)$ are ignored. For $n=3$, we have
\ba
\eta^{(3)}
&=
\eta^{(2)}+\mathcal{K}_1\phi-\mathcal{K}_1\sum_{j=1}^{\infty}K_j\eta^{(2)\otimes j}
\\
&\simeq
\eta_1-\mathcal{K}_1\sum_{j=2}^{\infty}K_j\eta^{(2)\otimes j}
\\
&\simeq
\eta_1-\mathcal{K}_1\sum_{j=2}^{\infty}K_j\left(
\eta_1+\eta_2-\mathcal{K}_1\sum_{m=3}^{\infty}K_m\eta_1^{\otimes m}
\right)^{\otimes j}
\\
&=
\eta_1-\mathcal{K}_1K_2\left(
\eta_1+\eta_2-\mathcal{K}_1\sum_{m=3}^{\infty}K_m\eta_1^{\otimes m}
\right)^{\otimes 2}
-\mathcal{K}_1K_3\left(
\eta_1+\eta_2-\mathcal{K}_1\sum_{m=3}^{\infty}K_m\eta_1^{\otimes m}
\right)^{\otimes 3}
\\
&-
\mathcal{K}_1\sum_{j=4}^{\infty}K_j\left(
\eta_1+\eta_2-\mathcal{K}_1\sum_{m=3}^{\infty}K_m\eta_1^{\otimes m}
\right)^{\otimes j}.
\ea
Thus,
\ba
\eta^{(3)}
&\simeq
\eta_1-\mathcal{K}_1K_2\eta_1\otimes\eta_1
-\mathcal{K}_1K_2\left(\eta_1\otimes\eta_2+\eta_2\otimes\eta_1\right)
-\mathcal{K}_1K_3\eta_1^{\otimes 3}
\\
&-
\mathcal{K}_1K_2\left(\eta_2\otimes\eta_2
-\eta_1\otimes\mathcal{K}_1\sum_{m=3}^{\infty}K_m\eta_1^{\otimes m}
-\eta_2\otimes\mathcal{K}_1\sum_{m=3}^{\infty}K_m\eta_1^{\otimes m}
+\left(\mathcal{K}_1\sum_{m=3}^{\infty}K_m\eta_1^{\otimes m}\right)^{\otimes2}
\right)
\\
&
+\mathcal{K}_1K_3\eta_1^{\otimes 3}
-\mathcal{K}_1K_3\left(
\eta_1+\eta_2-\mathcal{K}_1\sum_{m=3}^{\infty}K_m\eta_1^{\otimes m}
\right)^{\otimes 3}
-\mathcal{K}_1\sum_{j=4}^{\infty}K_j\left(
\eta_1+\eta_2-\mathcal{K}_1\sum_{m=3}^{\infty}K_m\eta_1^{\otimes m}
\right)^{\otimes j}.
\ea
Hence, if we ignore terms of order $O(h^4)$, we obtain
\ba
\eta^{(3)}
&\approx
\eta_1-\mathcal{K}_1K_2\eta_1\otimes\eta_1
-\mathcal{K}_1K_2\left(\eta_1\otimes\eta_2+\eta_2\otimes\eta_1\right)
-\mathcal{K}_1K_3\eta_1^{\otimes 3}
\\
&\simeq
\eta_1+\eta_2
-\mathcal{K}_2\left(K_1\mathcal{K}_1\phi\otimes(K_2\eta_1\otimes\eta_1)\right)
-\mathcal{K}_2\left((K_2\eta_1\otimes\eta_1)\otimes K_1\mathcal{K}_1\phi\right)
-\mathcal{K}_1K_3\eta_1^{\otimes 3}
\\
&=
\eta_1+\eta_2+\eta_3.
\ea
Similarly, we observe $\eta^{(4)}\simeq\sum_{j=1}^4\eta_j+O(h^5)$ and $\eta^{(5)}\simeq\sum_{j=1}^5\eta_j+O(h^6)$.

\section{Reduced inverse Born series}
\label{sec:reduce}

In \cite{Markel-Schotland22}, the reduced inverse Born series was considered in the limited case where the matrix which corresponds to the Green's function is invertible. In this section, using Lemma \ref{dominant}, we derive the reduced inverse Born series without assuming invertible matrices but assuming that $\mathcal{K}_1K_1$ is close to the identity $I$.

We write $\widetilde{\mathcal{K}}_1=\mathcal{K}_1$, $\widetilde{\mathcal{K}}_2=\mathcal{K}_2$. For $j\ge3$, we recursively define
\be
\widetilde{\mathcal{K}}_j
=-\left(\widetilde{\mathcal{K}}_{j-1}K_2\otimes K_1^{\otimes(j-2)}\right)
\mathcal{K}_1^{\otimes j}.
\ee

\begin{lem}
The following relation holds for any $\xi_k\in L^{\infty}(\omega)$ ($1\le k\le j+n$, $j\ge1$, $n\ge1$):
\begin{equation}
K_{j+1}\xi_1\otimes\cdots\otimes \xi_j\otimes(\mathcal{K}_1K_n\xi_{j+1}\otimes\cdots\otimes \xi_{j+n})
\simeq K_{j+n}\xi_1\otimes\cdots\otimes \xi_{j+n}.
\end{equation}
\label{generalKjn}
\end{lem}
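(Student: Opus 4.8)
The plan is to exploit the cascade structure of the kernels. Although $\mathcal{K}_1$ is only a \emph{regularized} pseudoinverse of $K_1$, the datum $K_n\,\xi_{j+1}\otimes\cdots\otimes\xi_{j+n}$ lies \emph{exactly} in the range of $K_1$: its defining integral has the form $\alpha_0\int_\Omega G(x,z)\,\rho_0(z)\,u_0(z)\,dz$ once one sets
\[
\rho_0(z_1)=(-1)^{n+1}\alpha_0^{n-1}\,\frac{\xi_{j+1}(z_1)}{u_0(z_1)}\int_\Omega\!\!\cdots\!\!\int_\Omega G(z_1,z_2)\xi_{j+2}(z_2)\cdots G(z_{n-1},z_n)\xi_{j+n}(z_n)u_0(z_n)\,dz_2\cdots dz_n
\]
for $n\ge2$, and $\rho_0=\xi_{j+1}$ for $n=1$. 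First I would check that $\rho_0\in L^{\infty}(\omega)$ with $\operatorname{supp}\rho_0\subseteq\omega$; this uses $\xi_k\in L^{\infty}(\omega)$, the integrability estimates on iterated Green's kernels already used in the proof of Lemma \ref{Kjpq}, and that $u_0$ is bounded below by a positive constant on $\omega$ (which holds, e.g., when $f\ge0$, $f\not\equiv0$, by the maximum principle for \eqref{diffeq2}). By Fubini this choice gives the exact identity $K_1\rho_0=K_n\,\xi_{j+1}\otimes\cdots\otimes\xi_{j+n}$ and, crucially, the same identity with $x$ allowed to be any interior point, i.e. $\alpha_0\int_\Omega G(x,z)\rho_0(z)u_0(z)\,dz$ equals the $n$-fold integral defining $K_n\,\xi_{j+1}\otimes\cdots\otimes\xi_{j+n}$ for every $x\in\Omega$, not only $x\in\pp\Omega$.

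The second step is the \emph{exact} kernel identity
\[
K_{j+1}\,\xi_1\otimes\cdots\otimes\xi_j\otimes\rho_0=K_{j+n}\,\xi_1\otimes\cdots\otimes\xi_{j+n}.
\]
To prove it I would expand the left-hand side, perform the innermost integration $\int_\Omega G(y_j,y_{j+1})\rho_0(y_{j+1})u_0(y_{j+1})\,dy_{j+1}$, and use the interior identity of Step~1 to replace it by the $n$-fold block $(-1)^{n+1}\alpha_0^{n-1}\int_\Omega\!\cdots\!\int_\Omega G(y_j,z_1)\xi_{j+1}(z_1)\cdots u_0(z_n)\,dz_1\cdots dz_n$. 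Relabelling $z_i\mapsto y_{j+i}$ and applying Fubini, this is exactly the tail of $K_{j+n}\,\xi_1\otimes\cdots\otimes\xi_{j+n}$; the prefactors match because the powers of $\alpha_0$ add up to $j+n$ and $(-1)^{(j+1)+1}(-1)^{n+1}=(-1)^{(j+n)+1}$.

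Finally I would transfer the regularization error. Since $K_1\rho_0=K_n\,\xi_{j+1}\otimes\cdots\otimes\xi_{j+n}$ and $\mathcal{K}_1K_1=I+\epsilon V$ with $\|V\|=1$,
\[
\left\|\mathcal{K}_1K_n\,\xi_{j+1}\otimes\cdots\otimes\xi_{j+n}-\rho_0\right\|_{L^{\infty}(\omega)}=\left\|\epsilon V\rho_0\right\|_{L^{\infty}(\omega)}\le\epsilon\,\|\rho_0\|_{L^{\infty}(\omega)}=O(\epsilon).
\]
The multilinear operator $K_{j+1}$ is bounded, $\|K_{j+1}\,\zeta_1\otimes\cdots\otimes\zeta_{j+1}\|_{L^p(\pp\Omega)}\le C_{j+1,p}\prod_{k=1}^{j+1}\|\zeta_k\|_{L^{\infty}(\omega)}$, by the argument of Lemma \ref{Kjpq} with the factors kept separate rather than collapsed. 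Applying this with the last tensor factor equal to $\mathcal{K}_1K_n\,\xi_{j+1}\otimes\cdots\otimes\xi_{j+n}-\rho_0$ and using Step~2 yields
\ba
&\left\|K_{j+1}\,\xi_1\otimes\cdots\otimes\xi_j\otimes\bigl(\mathcal{K}_1K_n\,\xi_{j+1}\otimes\cdots\otimes\xi_{j+n}\bigr)-K_{j+n}\,\xi_1\otimes\cdots\otimes\xi_{j+n}\right\|_{L^p(\pp\Omega)}\\
&\qquad\le C_{j+1,p}\Bigl(\prod_{k=1}^{j}\|\xi_k\|_{L^{\infty}(\omega)}\Bigr)\left\|\mathcal{K}_1K_n\,\xi_{j+1}\otimes\cdots\otimes\xi_{j+n}-\rho_0\right\|_{L^{\infty}(\omega)}=O(\epsilon),
\ea
which is the assertion. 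No step is deep; the two points needing care are the construction of $\rho_0$, which forces the hypothesis that $u_0$ is bounded away from $0$ on $\omega$ so that $1/u_0\in L^{\infty}(\omega)$, and the Fubini/regrouping bookkeeping of Step~2, keeping the correct power of $\alpha_0$ and the correct sign. I expect verifying the exact identity of Step~2 to absorb most of the work.
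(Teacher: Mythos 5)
Your proposal is correct and follows essentially the same route as the paper: both recognize the inner block $K_n\,\xi_{j+1}\otimes\cdots\otimes\xi_{j+n}$ as $K_1$ applied to an effective density, use $\mathcal{K}_1K_1=I+\epsilon V$ to peel off the regularized inverse up to an $O(\epsilon)$ remainder, and recombine the exact part into $K_{j+n}$. You are in fact more explicit than the paper about the division by $u_0$ (hence the need for $u_0$ bounded below on $\omega$), the sign and $\alpha_0$ bookkeeping, and the multilinear bound used to control the error term, all of which the paper leaves implicit.
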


\begin{proof}
\ba
&
K_{j+1}\xi_1\otimes\cdots\otimes \xi_j\otimes(\mathcal{K}_1K_n\xi_{j+1}\otimes\cdots\otimes \xi_{j+n})
\\
&=
\int_{\omega}\cdots\int_{\omega}G(\cdot,x_1)\xi_1(x_1)G(x_1,x_2)\xi_2(x_2)G(x_2,x_3)\times\cdots\times \xi_j(x_j)G(x_j,x_{j+1})
\\
&\times
\left(\mathcal{K}_1\int_{\omega}\cdots\int_{\omega}
G(\cdot,y_1)\xi_{j+1}(y_1)G(y_1,y_2)\times\cdots\times
\xi_{j+n}(y_n)u_0(y_n)\,dy_1\dots dy_n
\right)(x_{j+1})
\\
&\times
u_0(x_{j+1})\,dx_1\dots dx_{j+1}
\\
&=
\int_{\omega}\cdots\int_{\omega}G(\cdot,x_1)\xi_1(x_1)G(x_1,x_2)\xi_2(x_2)G(x_2,x_3)\times\cdots\times \xi_j(x_j)G(x_j,x_{j+1})
\\
&\times
\Biggl(\xi_{j+1}(x_{j+1})\int_{\omega}\cdots\int_{\omega}
G(x_{j+1},y_2)\times\cdots\times \xi_{j+n}(y_n)u_0(y_n)\,dy_2\dots dy_n
\\
&+
\epsilon\left(V\xi_{j+1}(\cdot)\int_{\omega}\cdots\int_{\omega}
G(\cdot,y_2)\times\cdots\times \xi_{j+n}(y_n)u_0(y_n)\,dy_2\dots dy_n
\right)(x_{j+1})u_0(x_{j+1})\Biggr)
\\
&\times
\,dx_1\dots dx_{j+1}
\\
&=
K_{j+n}\xi_1\otimes\cdots\otimes \xi_{j+n}
\\
&+
\epsilon\int_{\omega}\cdots\int_{\omega}G(\cdot,x_1)\xi_1(x_1)\times\cdots\times \xi_j(x_j)G(x_j,x_{j+1})
\\
&\times
\left(V\xi_{j+1}(\cdot)\int_{\omega}\cdots\int_{\omega}
G(\cdot,y_2)\times\cdots\times \xi_{j+n}(y_n)u_0(y_n)\,dy_2\dots dy_n
\right)(x_{j+1})
\\
&\times
u_0(x_{j+1})\,dx_1\dots dx_{j+1}.
\ea
\end{proof}

\begin{lem}
For $n\ge2$,
\begin{equation}
\mathcal{K}_n\simeq
-\left(\mathcal{K}_{n-1}K_2\otimes K_1^{\otimes(n-2)}\right)
\mathcal{K}_1^{\otimes n}.
\label{reduction}
\end{equation}
\label{dominant}
\end{lem}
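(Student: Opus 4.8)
The plan is to prove \eqref{reduction} by induction on $n$, using the recursive definition \eqref{arriveat} of $\mathcal{K}_n$ and repeatedly invoking Lemma \ref{generalKjn} to collapse nested compositions of the $K_i$'s into a single operator $K_{i_1+\cdots+i_m}$, at the cost of $O(\epsilon)$ each time. The base case $n=2$ is exactly \eqref{p6eq1}, which is an equality, hence certainly holds up to $O(\epsilon)$. For the inductive step, I would start from
\be
\mathcal{K}_n=-\left(\sum_{m=1}^{n-1}\mathcal{K}_m\sum_{i_1+\cdots+i_m=n}K_{i_1}\otimes\cdots\otimes K_{i_m}\right)\mathcal{K}_1^{\otimes n}
\ee
and argue that, modulo $O(\epsilon)$, only the single term with $m=n-1$ and the composition $(i_1,\dots,i_{n-1})=(2,1,\dots,1)$ (in all its orderings) survives, and that those orderings themselves reduce to the one displayed in \eqref{reduction}.

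The key steps, in order, are as follows. First I would show that every composition $(i_1,\dots,i_m)$ with some $i_k\ge2$ and some other $i_l\ge2$, or with a single $i_k\ge3$, produces a term that is $O(\epsilon)$ relative to the dominant one — here the point is that inserting $\mathcal{K}_1K_1=I+\epsilon V$ between factors is what upgrades, e.g., $K_2\otimes K_2$ into a higher $K_j$ plus an $\epsilon$-correction via Lemma \ref{generalKjn}; so terms that would need two such insertions are genuinely $O(\epsilon)$ after one reduction, while the "already reduced" recursive pieces of $\mathcal{K}_m$ are handled by the inductive hypothesis $\mathcal{K}_m\simeq\widetilde{\mathcal{K}}_m$. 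Second, among the compositions of $n$ of the form $(1,\dots,1,2,1,\dots,1)$ with exactly one entry equal to $2$ (so $m=n-1$), I would use Lemma \ref{generalKjn} to slide the lone $K_2$ leftward: each adjacent transposition $K_1\otimes K_2\mapsto K_2\otimes K_1$ (suitably composed with $\mathcal{K}_1$'s) changes the operator only by $O(\epsilon)$, so all $n-1$ such orderings coincide up to $O(\epsilon)$ with the canonical ordering $K_2\otimes K_1^{\otimes(n-2)}$. Third, I would combine these two reductions with $\mathcal{K}_{n-1}\simeq\widetilde{\mathcal{K}}_{n-1}$ — but in fact for \eqref{reduction} itself it suffices to keep $\mathcal{K}_{n-1}$ — to conclude $\mathcal{K}_n\simeq-\left(\mathcal{K}_{n-1}K_2\otimes K_1^{\otimes(n-2)}\right)\mathcal{K}_1^{\otimes n}$, absorbing the accumulated finitely many $O(\epsilon)$ errors (the number of compositions is bounded for fixed $n$, so there is no issue of the error constant blowing up within a single step).

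The main obstacle I expect is the bookkeeping in the first step: making precise that a composition with two "large" parts, such as $(i_1,\dots,i_m)$ containing $K_a\otimes\cdots\otimes K_b$ with $a,b\ge2$, really is $O(\epsilon)$ and not merely "reducible." The honest way to see this is that after one application of Lemma \ref{generalKjn} the factor $K_a$ (say) has been absorbed, lowering $m$ by one and landing inside a lower-order $\mathcal{K}$, at which point the inductive hypothesis applies and the leftover explicit second large factor $K_b$ is what gets recognized, via \eqref{arriveat} read backwards, as being part of $\widetilde{\mathcal{K}}_{n-1}K_2\otimes\cdots$ only after it too is moved or absorbed — and the mismatch between "where it actually sits" and "where it sits in $\widetilde{\mathcal{K}}_n$" is the $O(\epsilon)$. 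A secondary subtlety is that $\simeq$ has only been defined for operators $L^\infty(\omega)\to L^\infty(\omega)$ via operator norm, so I must check that each reduction step is an honest operator-norm estimate: this is fine because $\mathcal{K}_1$, the $K_i$, and $V$ are all bounded (Lemma \ref{Kjpq} and the regularization hypotheses), so every product appearing is a bounded operator and the $\epsilon$-factor comes out in front with a bounded constant.
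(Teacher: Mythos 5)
Your overall frame (induction on $n$ starting from the recursion \eqref{arriveat}, with Lemma \ref{generalKjn} as the workhorse) matches the paper, but the mechanism you propose for discarding the non-canonical terms is wrong, and your two key claims are jointly inconsistent with the conclusion. First, the terms coming from compositions with a part $\ge 3$ or with two parts $\ge 2$ are \emph{not} individually $O(\epsilon)$: already for $n=3$, the term $-\mathcal{K}_1K_3\mathcal{K}_1^{\otimes 3}$ is an $O(1)$ operator. It disappears from $\mathcal{K}_3$ only because it \emph{cancels} against the $m=2$ term $-\mathcal{K}_2(K_1\otimes K_2)\mathcal{K}_1^{\otimes 3}\simeq+\mathcal{K}_1K_3\mathcal{K}_1^{\otimes3}$ (Lemma \ref{generalKjn} with $j=1$, $n=2$, plus $\mathcal{K}_1K_1\simeq I$). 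Second, the ``slide the lone $K_2$ leftward by adjacent transpositions'' step fails: Lemma \ref{generalKjn} collapses a factor $\mathcal{K}_1K_n$ only when it occupies the \emph{trailing} slots of $K_{j+1}$, because the kernel of $K_j$ is the oriented chain $G(x,y_1)\eta(y_1)G(y_1,y_2)\cdots u_0(y_j)$ and is not symmetric in its arguments; $K_2\bigl((\mathcal{K}_1K_2\,\xi_1\otimes\xi_2)\otimes\xi_3\bigr)$ produces a branched kernel (both remaining integrations attach to the first variable) and is not $\simeq K_3(\xi_1\otimes\xi_2\otimes\xi_3)$. So $\mathcal{K}_2(K_2\otimes K_1)\mathcal{K}_1^{\otimes3}$ and $\mathcal{K}_2(K_1\otimes K_2)\mathcal{K}_1^{\otimes3}$ differ by more than $O(\epsilon)$. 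Note also the arithmetic: if all $n-1$ orderings of $(2,1,\dots,1)$ at level $m=n-1$ were each $\simeq$ the canonical term and everything else were $\simeq 0$, you would get $\mathcal{K}_n\simeq(n-1)\bigl(-(\mathcal{K}_{n-1}K_2\otimes K_1^{\otimes(n-2)})\mathcal{K}_1^{\otimes n}\bigr)$, contradicting \eqref{reduction} for $n\ge3$.

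What actually happens, and what the paper proves, is a telescoping cancellation across levels: by a secondary (downward) induction in an auxiliary index $j$ one shows
\[
-\Bigl(\sum_{m=1}^{n-j}\mathcal{K}_m\sum_{i_1+\cdots+i_m=n+1}K_{i_1}\otimes\cdots\otimes K_{i_m}\Bigr)\mathcal{K}_1^{\otimes(n+1)}
\simeq
\Bigl(\mathcal{K}_{n-j+1}\sum_{i_2+\cdots+i_{n-j+1}=n}K_1\otimes K_{i_2}\otimes\cdots\otimes K_{i_{n-j+1}}\Bigr)\mathcal{K}_1^{\otimes(n+1)},
\]
that is, each partial sum over $m\le n-j$ collapses modulo $O(\epsilon)$ to a single block that then cancels exactly the $i_1=1$ portion of the next level $m=n-j+1$; iterating leaves only the $i_1=2$ composition at the top level $m=n$. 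This step uses the outer inductive hypothesis \eqref{reduction} for the lower-order $\mathcal{K}_m$ together with Lemma \ref{generalKjn} in the form $K_2(I\otimes\mathcal{K}_1K_{i})\simeq K_{i+1}$. Your proposal is missing this cancellation structure entirely, so it cannot be repaired by tightening estimates; the decomposition itself has to change.
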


\begin{proof}
The proof can be done by mathematical induction. When $n=2$, the relation (\ref{reduction}) holds exactly (see (\ref{p6eq1})):
\be
\mathcal{K}_2=-\mathcal{K}_1K_2\mathcal{K}_1\otimes\mathcal{K}_1.
\ee
Hereafter, $n\ge3$. Suppose the relation (\ref{reduction}) holds up to $n$.

We have
\be
-\left(\mathcal{K}_1K_{n+1}\right)\mathcal{K}_1^{\otimes(n+1)}
\simeq
\left(\mathcal{K}_2K_1\otimes K_n\right)\mathcal{K}_1^{\otimes(n+1)}
\ee
because
\ba
\left(\mathcal{K}_2K_1\otimes K_n\right)\mathcal{K}_1^{\otimes(n+1)}
&=
-\left((\mathcal{K}_1K_2\mathcal{K}_1\otimes\mathcal{K}_1)K_1\otimes K_n\right)\mathcal{K}_1^{\otimes(n+1)}
\\
&\simeq
-\mathcal{K}_1K_2\left(I\otimes\mathcal{K}_1K_n\right)\mathcal{K}_1^{\otimes(n+1)}
\\
&\simeq
-\mathcal{K}_1K_{n+1}\mathcal{K}_1^{\otimes(n+1)},
\ea
where we used $\mathcal{K}_1K_1\simeq I$ and Lemma \ref{generalKjn}. Let us suppose for $1\le j\le n-1$,
\begin{equation}\begin{aligned}
&-
\left(\sum_{m=1}^{n-j}\mathcal{K}_m\sum_{i_1+\cdots+i_m=n+1}
K_{i_1}\otimes\cdots\otimes K_{i_m}\right)\mathcal{K}_1^{\otimes(n+1)}
\\
&\simeq
\left(\mathcal{K}_{n-j+1}\sum_{i_2+\cdots+i_{n-j+1}=n}K_1\otimes K_{i_2}\otimes
\cdots\otimes K_{i_{n-j+1}}\right)\mathcal{K}_1^{\otimes(n+1)}.
\end{aligned}
\label{relation1inSec6}
\end{equation}
Then we have
\begin{equation}\begin{aligned}
&-
\left(\sum_{m=1}^{n-j+1}\mathcal{K}_m\sum_{i_1+\cdots+i_m=n+1}
K_{i_1}\otimes\cdots\otimes K_{i_m}\right)
\mathcal{K}_1^{\otimes(n+1)}
\\
&=
-\left(\mathcal{K}_{n-j+1}\sum_{i_1+\cdots+i_{n-j+1}=n}
K_{i_1+1}\otimes K_{i_2}\otimes\cdots\otimes K_{i_{n-j+1}}\right)
\mathcal{K}_1^{\otimes(n+1)}
\\
&-
\left(\mathcal{K}_{n-j+1}\sum_{i_2+\cdots+i_{n-j+1}=n}
K_1\otimes K_{i_2}\otimes\cdots\otimes K_{i_{n-j+1}}\right)
\mathcal{K}_1^{\otimes(n+1)}
\\
&
-\left(\sum_{m=1}^{n-j}\mathcal{K}_m\sum_{i_1+\cdots+i_m=n+1}
K_{i_1}\otimes\cdots\otimes K_{i_m}\right)
\mathcal{K}_1^{\otimes(n+1)}
\\
&\simeq
-\left(\mathcal{K}_{n-j+1}\sum_{i_1+\cdots+i_{n-j+1}=n}
K_{i_1+1}\otimes K_{i_2}\otimes\cdots\otimes K_{i_{n-j+1}}\right)
\mathcal{K}_1^{\otimes(n+1)},
\end{aligned}
\label{CondA}
\end{equation}
where we arrived at the rightmost side by using the assumption (\ref{relation1inSec6}). For $2\le j\le n-1$, using (\ref{reduction}),
\begin{equation}\begin{aligned}
&
\left(\mathcal{K}_{n-j+2}\sum_{i_2+\cdots+i_{n-j+2}=n}K_1\otimes K_{i_2}\otimes
\cdots\otimes K_{i_{n-j+2}}\right)\mathcal{K}_1^{\otimes(n+1)}
\\
&\simeq
-\Biggl((\mathcal{K}_{n-j+1}K_2\otimes K_1^{\otimes(n-j)})\mathcal{K}_1^{\otimes(n-j+2)}
\\
&\times
\sum_{i_2+\cdots+i_{n-j+2}=n}K_1\otimes K_{i_2}\otimes
\cdots\otimes K_{i_{n-j+2}}\Biggr)
\mathcal{K}_1^{\otimes(n+1)}
\\
&\simeq
-\left(\mathcal{K}_{n-j+1}\sum_{i_2+\cdots+i_{n-j+2}=n}
K_2(I\otimes \mathcal{K}_1K_{i_2})\otimes K_{i_3}\otimes
\cdots\otimes K_{i_{n-j+2}}\right)\mathcal{K}_1^{\otimes(n+1)}
\\
&\simeq
-\left(\mathcal{K}_{n-j+1}\sum_{i_2+\cdots+i_{n-j+2}=n}K_{i_2+1}\otimes K_{i_3}\otimes
\cdots\otimes K_{i_{n-j+2}}\right)\mathcal{K}_1^{\otimes(n+1)}.
\end{aligned}
\label{CondB}
\end{equation}
Using (\ref{CondA}), (\ref{CondB}), the relation (\ref{relation1inSec6}) holds also for $j-1$. By repeating the procedure, we arrive at the relation below.
\begin{equation}
\begin{aligned}
&-
\left(\sum_{m=1}^{n-1}\mathcal{K}_m\sum_{i_1+\cdots+i_m=n+1}
K_{i_1}\otimes\cdots\otimes K_{i_m}\right)
\mathcal{K}_1^{\otimes(n+1)}
\\
&\simeq
\left(\mathcal{K}_n\sum_{i_2+\cdots+i_n=n}K_1\otimes K_{i_2}\otimes
\cdots\otimes K_{i_n}\right)\mathcal{K}_1^{\otimes(n+1)}.
\end{aligned}
\label{supplsimeq}
\end{equation}

Therefore we obtain
\ba
\mathcal{K}_{n+1}
&=
-\left(\sum_{m=1}^n\mathcal{K}_m\sum_{i_1+\cdots+i_m=n+1}
K_{i_1}\otimes\cdots\otimes K_{i_m}\right)
\mathcal{K}_1^{\otimes(n+1)}
\\
&=
-\left(\mathcal{K}_nK_2\otimes K_1^{\otimes(n-1)}\right)
\mathcal{K}_1^{\otimes(n+1)}
\\
&-
\left(\sum_{m=1}^{n-1}\mathcal{K}_m\sum_{i_1+\cdots+i_m=n+1}
K_{i_1}\otimes\cdots\otimes K_{i_m}\right)
\mathcal{K}_1^{\otimes(n+1)}
\\
&-
\left(\mathcal{K}_n\sum_{i_2+\cdots+i_n=n}K_1\otimes K_{i_2}\otimes
\cdots\otimes K_{i_n}\right)\mathcal{K}_1^{\otimes(n+1)}
\\
&\simeq
-\left(\mathcal{K}_nK_2\otimes K_1^{\otimes(n-1)}\right)
\mathcal{K}_1^{\otimes(n+1)},
\ea
where we used (\ref{supplsimeq}). The proof is complete if we recursively establish the relation (\ref{reduction}) for $n\ge3$.
\end{proof}

Lemma \ref{dominant} implies
\be
\widetilde{\mathcal{K}}_1=\mathcal{K}_1,\quad
\widetilde{\mathcal{K}}_2=\mathcal{K}_2,\quad
\widetilde{\mathcal{K}}_j\simeq \mathcal{K}_j,\quad j\ge3.
\ee
Let us introduce the reduced inverse Born series as
\be
\widetilde{\eta}=
\widetilde{\eta}_1+\widetilde{\eta}_2+\widetilde{\eta}_3+\widetilde{\eta}_4+\cdots,
\ee
where
\be
\widetilde{\eta}_j=\widetilde{\mathcal{K}}_j\phi^{\otimes j},\quad j\ge1.
\ee
For example, $\widetilde{\eta}_3$, $\widetilde{\eta}_4$ can be calculated as 
\ba
\widetilde{\eta}_3
&=
\widetilde{\mathcal{K}}_3\phi^{\otimes3}
=-\left(\widetilde{\mathcal{K}}_2K_2\otimes K_1\right)
\mathcal{K}_1\phi\otimes\mathcal{K}_1\phi\otimes\mathcal{K}_1\phi
\\
&=
\mathcal{K}_1K_2(\mathcal{K}_1K_2)\otimes (\mathcal{K}_1K_1)
(\mathcal{K}_1\phi\otimes\mathcal{K}_1\phi\otimes\mathcal{K}_1\phi),
\ea
\ba
\widetilde{\eta}_4
&=
\widetilde{\mathcal{K}}_4\phi^{\otimes4}
=-\left(\widetilde{\mathcal{K}}_3K_2\otimes K_1\otimes K_1\right)
\mathcal{K}_1\phi\otimes\mathcal{K}_1\phi\otimes\mathcal{K}_1\phi\otimes\mathcal{K}_1\phi
\\
&=
-\left((\mathcal{K}_1K_2(\mathcal{K}_1K_2)\otimes (\mathcal{K}_1K_1))
(\mathcal{K}_1K_2\otimes\mathcal{K}_1K_1\otimes\mathcal{K}_1K_1)
\right)
\\
&\times
\mathcal{K}_1\phi\otimes\mathcal{K}_1\phi\otimes\mathcal{K}_1\phi
\otimes\mathcal{K}_1\phi.
\ea

To consider the convergence of the reduced Born inverse series, we introduce
\be
\mu=\alpha_0\sup_{x\in\omega}\|G(x,\cdot)\|_{L^{\infty}(\omega)},\quad
\nu=\alpha_0|\omega|^{1/2}\sup_{x\in\omega}\|G(x,\cdot)\|_{L^p(\pp\Omega)}.
\ee
The following estimates are obtained for the forward operators $K_j$.

\begin{lem}[Moskow-Schotland \cite{Moskow-Schotland08}]
\be
\|K_j\|\le\nu\mu^{j-1},\quad j\ge1.
\ee
\label{MS08a}
\end{lem}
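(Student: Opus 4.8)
The plan is to recognize $K_j$ as a fixed composition of elementary integral operators and estimate the chain factor by factor. Introduce the interior propagator $\mathsf{G}$ and the trace propagator $\mathsf{B}$, both built from the kernel $G$, acting respectively as $L^q(\omega)\to L^q(\omega)$ and $L^q(\omega)\to L^p(\pp\Omega)$, together with the multiplication operator $M_\eta$. Then one writes
\be
K_j\left(\eta_1\otimes\cdots\otimes\eta_j\right)
=(-1)^{j+1}\alpha_0^j\,\mathsf{B}\,M_{\eta_1}\,\mathsf{G}\,M_{\eta_2}\cdots\mathsf{G}\,M_{\eta_j}u_0,
\ee
so that $\|K_j\|$ is bounded by the product of $j$ copies of $\alpha_0$, one operator norm of $\mathsf{B}$, $j-1$ operator norms of $\mathsf{G}$, and $j$ norms of the multiplications, the last of which hits the fixed field $u_0$.

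First I would record $\|M_\eta\|\le\|\eta\|_{L^\infty(\omega)}$ on every $L^q(\omega)$, which lets me normalize $\|\eta_i\|_{L^\infty(\omega)}\le1$ and drop the multiplications at no cost. Next I would bound the trace propagator: by Minkowski's integral inequality $\|\mathsf{B}v\|_{L^p(\pp\Omega)}\le\int_\omega\|G(\cdot,y)\|_{L^p(\pp\Omega)}\,|v(y)|\,dy$, and one Cauchy--Schwarz step in $y$ gives $\|\mathsf{B}v\|_{L^p(\pp\Omega)}\le|\omega|^{1/2}\bigl(\sup_{y\in\omega}\|G(\cdot,y)\|_{L^p(\pp\Omega)}\bigr)\|v\|_{L^2(\omega)}=\alpha_0^{-1}\nu\,\|v\|_{L^2(\omega)}$; here symmetry of $G$ (self-adjointness of $-D_0\Delta+\alpha_0$ with the Robin condition) identifies $\sup_y\|G(\cdot,y)\|_{L^p(\pp\Omega)}$ with $\sup_x\|G(x,\cdot)\|_{L^p(\pp\Omega)}$, and this single Cauchy--Schwarz is exactly what manufactures the lone factor $|\omega|^{1/2}$ in $\nu$. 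For each interior propagator I would use a Schur/Young-type estimate on the kernel $G|_{\omega\times\omega}$ to obtain $\|\mathsf{G}\|\le\alpha_0^{-1}\mu$ in the intermediate scale carried along the chain. Multiplying the factors then gives $\alpha_0^j\cdot(\alpha_0^{-1}\nu)\cdot(\alpha_0^{-1}\mu)^{j-1}=\nu\mu^{j-1}$, and the case $j=1$ is just the trace estimate.

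The hard part will be the bookkeeping in the interior of the chain: one must fix a single family of $L^q(\omega)$ spaces through which the composition $\mathsf{B}\,M_{\eta_1}\,\mathsf{G}\,M_{\eta_2}\cdots$ passes so that each interior layer contributes exactly $\mu$ and the only surplus volume factor anywhere is the $|\omega|^{1/2}$ from the trace step. This essentially forces an asymmetric choice of exponents on the two sides of each interior kernel (legitimate because $G(x,y)=G(y,x)$ and the diffusion Green's function is an $L^1$-smoothing kernel on $\omega$), and one has to be careful about where the fixed background field $u_0$ enters, since it is absorbed into the constants in the Moskow--Schotland normalization. Once the intermediate spaces are pinned down, every individual estimate is elementary.
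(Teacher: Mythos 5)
The paper does not actually prove this lemma---it is imported from Moskow--Schotland---but its proof of the companion estimate, Lemma \ref{Kjpq}, is precisely the peeling/chain argument you propose: factor $K_j$ into one boundary trace of $G$ followed by $j-1$ interior integrations against $G$ interleaved with multiplications by $\eta$, and estimate the chain factor by factor. So your overall strategy is the right one, and your trace step (Minkowski plus one Cauchy--Schwarz to manufacture the single $|\omega|^{1/2}$ in $\nu$, using the symmetry of $G$) is fine.

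However, the two points you yourself defer as ``the hard part'' are genuine gaps, and with the definitions of $\mu$ and $\nu$ used in this paper they do not close. First, the interior bound $\|\mathsf{G}\|\le\alpha_0^{-1}\mu$: a Schur or Young test on the kernel $G|_{\omega\times\omega}$ controls $\|\mathsf{G}\|_{L^q(\omega)\to L^q(\omega)}$ by $\sup_{x\in\omega}\|G(x,\cdot)\|_{L^1(\omega)}$, not by $\sup_{x\in\omega}\|G(x,\cdot)\|_{L^\infty(\omega)}=\alpha_0^{-1}\mu$; the only operator norm controlled by the sup of the kernel alone is $L^1\to L^\infty$, and chaining such maps forces a descent from $L^\infty$ back to $L^1$ at a cost of $|\omega|$ per interior layer. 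No ``asymmetric choice of exponents'' circumvents this: you end up with either $\bigl(\sup_{x}\|G(x,\cdot)\|_{L^1(\omega)}\bigr)^{j-1}$---which is exactly what the paper's Lemma \ref{Kjpq} produces---or an unaccounted factor $|\omega|^{j-1}$ multiplying $\mu^{j-1}$. Second, the chain terminates in $M_{\eta_j}u_0$, so the final factor is a norm of $\eta_j u_0$ rather than of $\eta_j$; a factor $\|u_0\|_{L^1(\omega)}$ (or $\|u_0\|_{L^2(\omega)}$, depending on where you run the chain) survives and appears nowhere in $\nu$ or $\mu$ as defined, so declaring it ``absorbed into the constants'' does not yield the stated inequality without an explicit normalization of $u_0$. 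Both discrepancies are visible in the constant $C_{j,p}$ of Lemma \ref{Kjpq}, which carries $\bigl(\sup_{y}\|G(\cdot,y)\|_{L^1(\omega)}\bigr)^{j-1}\|u_0\|_{L^1(\omega)}$ where you want $\mu^{j-1}$. To close the argument you must either carry out the interior estimates with the $L^1(\omega)$ kernel norm (accepting that this is the quantity $\mu$ effectively denotes in Moskow--Schotland) or track the volume and $u_0$ factors explicitly.
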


\begin{prop}
We assume that there exists $M\in(0,1)$ such that $\nu\|\mathcal{K}_1\|\le M$. Then the reduced inverse Born series converges if $\nu^{-1}\mu\|\phi\|_{L^p(\pp\Omega)}<1$.
\end{prop}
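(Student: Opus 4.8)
The goal is to show the reduced inverse Born series $\widetilde\eta = \sum_{j\ge1}\widetilde{\mathcal K}_j\phi^{\otimes j}$ converges under the stated hypotheses. My approach is to produce an absolutely convergent majorant by estimating $\|\widetilde\eta_j\|_{L^\infty(\omega)} = \|\widetilde{\mathcal K}_j\phi^{\otimes j}\|_{L^\infty(\omega)}$ term by term, using the recursive definition $\widetilde{\mathcal K}_j = -(\widetilde{\mathcal K}_{j-1}K_2\otimes K_1^{\otimes(j-2)})\mathcal K_1^{\otimes j}$ for $j\ge3$ together with Lemma \ref{MS08a} ($\|K_j\|\le\nu\mu^{j-1}$) and the boundedness of $\mathcal K_1$. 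The key point is that the reduced recursion involves only $\widetilde{\mathcal K}_{j-1}$, $K_2$, and copies of $K_1$ and $\mathcal K_1$, so a clean one-step bound propagates into a geometric bound rather than the exponentially-proliferating sum that the full series $\mathcal K_j$ produces.

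First I would unwind the recursion: since $\widetilde{\mathcal K}_1=\mathcal K_1$ and $\widetilde{\mathcal K}_2=\mathcal K_2=-\mathcal K_1K_2\mathcal K_1\otimes\mathcal K_1$, induction on $j$ gives
\[
\widetilde{\mathcal K}_j = (-1)^{j-1}\,\mathcal K_1\,\bigl(K_2\otimes\mathcal K_1 K_2\otimes(\mathcal K_1 K_1)^{\otimes?}\cdots\bigr)\cdots,
\]
more precisely a single product in which $\widetilde{\mathcal K}_{j-1}$ appears once; so taking operator norms and using submultiplicativity gives a bound of the form $\|\widetilde{\mathcal K}_j\|\le \|\widetilde{\mathcal K}_{j-1}\|\cdot\|K_2\|\cdot\|\mathcal K_1\|^{j}\cdot\|K_1\|^{j-2}$. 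Then I would estimate $\|\widetilde\eta_j\|_{L^\infty(\omega)}\le\|\widetilde{\mathcal K}_j\|\,\|\phi\|_{L^p(\partial\Omega)}^j$ and, plugging in $\|K_1\|\le\nu$, $\|K_2\|\le\nu\mu$ from Lemma \ref{MS08a} and $\nu\|\mathcal K_1\|\le M<1$, I would collect the factors so that the ratio $\|\widetilde\eta_{j+1}\|/\|\widetilde\eta_j\|$ is bounded by a constant multiple of $\mu\|\phi\|_{L^p(\partial\Omega)}/\nu$ times powers of $M$. The hypothesis $\nu^{-1}\mu\|\phi\|_{L^p(\partial\Omega)}<1$ then forces the ratio below $1$ for $j$ large, so $\sum_j\|\widetilde\eta_j\|_{L^\infty(\omega)}$ is dominated by a convergent geometric series and the reduced inverse Born series converges absolutely in $L^\infty(\omega)$.

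**Main obstacle.** The delicate part is the bookkeeping in the one-step norm estimate: I must track exactly how many copies of $\mathcal K_1$, $K_1$ and $\nu$, $\mu$, $M$ accumulate at each level of the recursion so that, after substituting $\nu\|\mathcal K_1\|\le M$, the surplus powers of $M<1$ exactly absorb any extra constants (in particular the extra $\|\mathcal K_1\|$'s and $\|K_1\|$'s that appear each step) and leave a clean geometric factor governed by $\nu^{-1}\mu\|\phi\|_{L^p(\partial\Omega)}$. Getting the exponents to line up is the whole content of the proof; once the recursive inequality is written correctly, the convergence conclusion is immediate. I would also note in passing that $\phi\in L^p(\partial\Omega)$ is finite (from the Born series (\ref{Born}) and Lemma \ref{Kjpq}), so $\|\phi\|_{L^p(\partial\Omega)}$ in the hypothesis is a genuine finite quantity.
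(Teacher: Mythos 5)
Your proposal is correct and follows essentially the same route as the paper's proof: both use the one-step bound $\|\widetilde{\mathcal{K}}_n\|\le\|\widetilde{\mathcal{K}}_{n-1}\|\,\|K_2\|\,\|K_1\|^{n-2}\|\mathcal{K}_1\|^n$ from the reduced recursion, insert Lemma \ref{MS08a} and $\nu\|\mathcal{K}_1\|\le M$, and conclude by comparison with a geometric series in $\nu^{-1}\mu\|\phi\|_{L^p(\pp\Omega)}$. The only cosmetic difference is that you phrase the conclusion as a ratio test while the paper iterates the recursion to the closed-form bound $\|\widetilde{\mathcal{K}}_n\|\le\mu^{-1}(\mu/\nu)^nM^{n(n+1)/2}$.
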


\begin{proof}
Using Lemma \ref{MS08a}, we begin with
\ba
\left\|\widetilde{\mathcal{K}}_n\right\|
&=
\left\|(\widetilde{\mathcal{K}}_{n-1}K_2\otimes K_1^{\otimes(n-2)})
\mathcal{K}_1^{\otimes n}\right\|
\\
&\le
\|\widetilde{\mathcal{K}}_{n-1}\|\|K_2\|\|K_1\|^{n-2}\|\mathcal{K}_1\|^n
\\
&\le
\|\widetilde{\mathcal{K}}_{n-1}\|\nu^{n-1}\mu\|\mathcal{K}_1\|^n
\\
&\le
\nu^{n(n-1)/2}\mu^{n-1}\|\mathcal{K}_1\|^{(n+1)n/2}
=
\frac{1}{\mu}\left(\frac{\mu}{\nu}\right)^n
\left(\nu\|\mathcal{K}_1\|\right)^{(n+1)n/2}.
\ea
We have
\ba
\|\eta\|_{L^{\infty}(\omega)}
&=
\sum_{n=1}^{\infty}\left\|\widetilde{\mathcal{K}}_n\phi^{\otimes n}\right\|_{L^{\infty}(\omega)}
\le
\sum_{n=1}^{\infty}\left\|\widetilde{\mathcal{K}}_n\right\|\|\phi\|_{L^p(\pp\Omega)}^n
\\
&\le
\frac{1}{\mu}\sum_{n=1}^{\infty}M^{(n+1)n/2}\left(\frac{\mu}{\nu}\|\phi\|_{L^p(\pp\Omega)}\right)^n.
\ea
Thus the proof is complete.
\end{proof}

From (\ref{iterativeRed1}), we have
\ba
\widetilde{\eta}^{(2)}&=\eta_1+\eta_2,
\\
\widetilde{\eta}^{(3)}&=
\eta_1+\eta_2-\mathcal{K}_1(K_2\eta_2\otimes\eta_1)
\simeq
\eta_1+\eta_2+\widetilde{\eta}_3
\\
\widetilde{\eta}^{(4)}&\simeq
\eta_1+\eta_2+\widetilde{\eta}_3-\mathcal{K}_1K_2\widetilde{\eta}_3\otimes\eta_1
\\
&=
\eta_1+\eta_2+\widetilde{\eta}_3-\mathcal{K}_1K_2\left(
\mathcal{K}_1K_2(\mathcal{K}_1K_2)\otimes(\mathcal{K}_1K_1)(\mathcal{K}_1\phi\otimes\mathcal{K}_1\phi\otimes\mathcal{K}_1\phi)\right)\otimes\eta_1
\\
&\simeq
\eta_1+\eta_2+\widetilde{\eta}_3+\widetilde{\eta}_4.
\ea
In this way, the iterative method (\ref{iterativeRed1}) corresponds to the reduced inverse Born series. Theorem \ref{relation_ite_rIBS} below states that the relation $\widetilde{\eta}^{(n)}\simeq\sum_{j=1}^n\widetilde{\eta}_j$ generally holds.

Recall
\be
\widetilde{\mathcal{K}}_2=-\mathcal{K}_1K_2\mathcal{K}_1^{\otimes2}.
\ee

\begin{prop}
For $n\ge3$,
\ba
\widetilde{\mathcal{K}}_n
&\simeq
(-1)^{n-1}\mathcal{K}_1K_2\left(\mathcal{K}_1K_2\otimes\mathcal{K}_1K_1\right)\left(\mathcal{K}_1K_2\otimes(\mathcal{K}_1K_1)^{\otimes2}\right)
\\
&\cdots
\left(\mathcal{K}_1K_2\otimes(\mathcal{K}_1K_1)^{\otimes(n-2)}\right)
\mathcal{K}_1^{\otimes n}.
\ea
\label{Kappa1K2otimesKappa1K1}
\end{prop}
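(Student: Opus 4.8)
The plan is to prove the identity by induction on $n$, simply unrolling the defining recursion $\widetilde{\mathcal{K}}_j=-(\widetilde{\mathcal{K}}_{j-1}K_2\otimes K_1^{\otimes(j-2)})\mathcal{K}_1^{\otimes j}$ ($j\ge3$) from the base value $\widetilde{\mathcal{K}}_2=\mathcal{K}_2=-\mathcal{K}_1K_2\mathcal{K}_1^{\otimes2}$ supplied by (\ref{p6eq1}). To keep the telescoping product honest I would first set
\be
\mathcal{P}_k=\mathcal{K}_1K_2\otimes(\mathcal{K}_1K_1)^{\otimes k}\colon L^{\infty}(\omega^{k+2})\to L^{\infty}(\omega^{k+1}),\qquad k\ge1,
\ee
so that the assertion becomes $\widetilde{\mathcal{K}}_n=(-1)^{n-1}(\mathcal{K}_1K_2)\,\mathcal{P}_1\mathcal{P}_2\cdots\mathcal{P}_{n-2}\,\mathcal{K}_1^{\otimes n}$ for $n\ge3$. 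At this stage I would also check that the listed domains and codomains compose, with $\mathcal{K}_1^{\otimes n}\colon L^p(\pp\Omega)^{\otimes n}\to L^{\infty}(\omega^n)$ at the right end and $\mathcal{K}_1K_2\colon L^{\infty}(\omega^2)\to L^{\infty}(\omega)$ at the left end, so that the claimed composition is well defined.

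For $n=3$ I would substitute $\widetilde{\mathcal{K}}_2=-\mathcal{K}_1K_2\mathcal{K}_1^{\otimes2}$ into $\widetilde{\mathcal{K}}_3=-\widetilde{\mathcal{K}}_2(K_2\otimes K_1)\mathcal{K}_1^{\otimes3}$ and collapse the middle factor via functoriality of the tensor product, $\mathcal{K}_1^{\otimes2}(K_2\otimes K_1)=(\mathcal{K}_1K_2)\otimes(\mathcal{K}_1K_1)=\mathcal{P}_1$, landing on $\widetilde{\mathcal{K}}_3=\mathcal{K}_1K_2\,\mathcal{P}_1\,\mathcal{K}_1^{\otimes3}$, which is the claim with sign $(-1)^{3-1}=1$. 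For the inductive step, assuming the formula for $n$, I would insert it into $\widetilde{\mathcal{K}}_{n+1}=-\widetilde{\mathcal{K}}_n(K_2\otimes K_1^{\otimes(n-1)})\mathcal{K}_1^{\otimes(n+1)}$; the leading sign multiplies to $(-1)^n=(-1)^{(n+1)-1}$, and the only computation is the middle piece $\mathcal{K}_1^{\otimes n}(K_2\otimes K_1^{\otimes(n-1)})$, which by functoriality (first two slots through $K_2$ then $\mathcal{K}_1$, the remaining $n-1$ slots through $K_1$ then $\mathcal{K}_1$) equals $(\mathcal{K}_1K_2)\otimes(\mathcal{K}_1K_1)^{\otimes(n-1)}=\mathcal{P}_{n-1}$. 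Appending this one new factor to the product yields exactly $\widetilde{\mathcal{K}}_{n+1}=(-1)^n(\mathcal{K}_1K_2)\,\mathcal{P}_1\cdots\mathcal{P}_{n-1}\,\mathcal{K}_1^{\otimes(n+1)}$, which closes the induction.

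I do not expect a genuine obstacle here. In fact this argument produces the identity with equality rather than with $\simeq$, since it never invokes $\mathcal{K}_1K_1\simeq I$ nor Lemma \ref{dominant}; the statement is really just the defining recursion written in closed form, and the weaker $\simeq$ asserted is then immediate. The only points requiring care are purely organizational: fixing once and for all the reading of $\widetilde{\mathcal{K}}_{j-1}K_2\otimes K_1^{\otimes(j-2)}$ as $\widetilde{\mathcal{K}}_{j-1}\circ(K_2\otimes K_1^{\otimes(j-2)})$ and tracking which tensor slots feed which forward operator, and replacing the informal ellipsis in the displayed product by the indexed family $(\mathcal{P}_k)$ so that the inductive step becomes literally "append the factor $\mathcal{P}_{n-1}$". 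Once the $\mathcal{P}_k$ notation is in place, the base case and the step are each a one-line computation.
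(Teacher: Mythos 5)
Your proof is correct and follows essentially the same route as the paper: induction on $n$ from the base case $\widetilde{\mathcal{K}}_2=-\mathcal{K}_1K_2\mathcal{K}_1^{\otimes2}$, unrolling the defining recursion and using the functoriality identity $\mathcal{K}_1^{\otimes k}\left(K_2\otimes K_1^{\otimes(k-1)}\right)=\left(\mathcal{K}_1K_2\right)\otimes\left(\mathcal{K}_1K_1\right)^{\otimes(k-1)}$ to append one factor per step. Your observation that the relation actually holds with equality (the $\simeq$ being superfluous here) is a valid sharpening, since every step in the paper's own induction is an identity once the recursion and (\ref{p6eq1}) are taken as definitions; the $\mathcal{P}_k$ bookkeeping is a harmless notational convenience.
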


For example,
\be
\widetilde{\mathcal{K}}_3\simeq
-\widetilde{\mathcal{K}}_2\left(K_2\otimes K_1\right)\mathcal{K}_1^{\otimes3}=
\mathcal{K}_1K_2\left(\mathcal{K}_1K_2\otimes\mathcal{K}_1K_1\right)\mathcal{K}_1^{\otimes3}.
\ee

\begin{proof}
Suppose
\ba
\widetilde{\mathcal{K}}_k
&\simeq
(-1)^{k-1}\mathcal{K}_1K_2\left(\mathcal{K}_1K_2\otimes\mathcal{K}_1K_1\right)
\left(\mathcal{K}_1K_2\otimes\mathcal{K}_1K_1\otimes\mathcal{K}_1K_1\right)
\\
&\cdots
\left(\mathcal{K}_1K_2\otimes(\mathcal{K}_1K_1)^{\otimes(k-2)}
\right)\mathcal{K}_1^{\otimes k}
\ea
for some $k\ge3$. In particular, the relation holds for $k=3$. Then we have
\ba
\widetilde{\mathcal{K}}_{k+1}
&=
-\widetilde{\mathcal{K}}_k\left(K_2\otimes K_1^{\otimes(k-1)}\right)\mathcal{K}_1^{\otimes(k+1)}
\\
&\simeq
(-1)^k\Biggl[\mathcal{K}_1K_2\left(\mathcal{K}_1K_2\otimes\mathcal{K}_1K_1\right)
\left(\mathcal{K}_1K_2\otimes\mathcal{K}_1K_1\otimes\mathcal{K}_1K_1\right)
\\
&\cdots
\left(\mathcal{K}_1K_2\otimes(\mathcal{K}_1K_1)^{\otimes(k-2)}
\right)\mathcal{K}_1^{\otimes k}\Biggr]
\left(K_2\otimes K_1^{\otimes(k-1)}\right)\mathcal{K}_1^{\otimes(k+1)}
\\
&=
(-1)^k\Biggl[\mathcal{K}_1K_2\left(\mathcal{K}_1K_2\otimes\mathcal{K}_1K_1\right)
\left(\mathcal{K}_1K_2\otimes\mathcal{K}_1K_1\otimes\mathcal{K}_1K_1\right)
\\
&\cdots
\left(\mathcal{K}_1K_2\otimes(\mathcal{K}_1K_1)^{\otimes(k-2)}\right)\Biggr]
\left(\mathcal{K}_1K_2\otimes(\mathcal{K}_1K_1)^{\otimes(k-1)}\right)
\mathcal{K}_1^{\otimes(k+1)}
\\
\\
&=
(-1)^k\mathcal{K}_1K_2\left(\mathcal{K}_1K_2\otimes\mathcal{K}_1K_1\right)
\left(\mathcal{K}_1K_2\otimes\mathcal{K}_1K_1\otimes\mathcal{K}_1K_1\right)
\\
&\cdots
\left(\mathcal{K}_1K_2\otimes(\mathcal{K}_1K_1)^{\otimes(k-1)}\right)
\mathcal{K}_1^{\otimes(k+1)}.
\ea
Thus the relation holds for $k+1$. The proof is complete by mathematical induction.
\end{proof}

\begin{thm}
\be
\widetilde{\eta}^{(n)}\simeq\sum_{j=1}^n\widetilde{\eta}_j,\quad n\ge1.
\ee
\label{relation_ite_rIBS}
\end{thm}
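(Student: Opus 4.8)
The plan is to prove the identity $\widetilde{\eta}^{(n)}\simeq\sum_{j=1}^n\widetilde{\eta}_j$ by induction on $n$, using the fast iteration (\ref{iterativeRed1}) together with the closed form for $\widetilde{\mathcal{K}}_j$ from Proposition \ref{Kappa1K2otimesKappa1K1} and the single-step reduction lemma (Lemma \ref{generalKjn}). The base cases $n=1,2,3$ are already checked in the text ($\widetilde{\eta}^{(1)}=\eta_1=\widetilde{\eta}_1$, $\widetilde{\eta}^{(2)}=\eta_1+\eta_2$, and $\widetilde{\eta}^{(3)}\simeq\eta_1+\eta_2+\widetilde{\eta}_3$), so the work is the inductive step.

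First I would record the per-step increment of the iteration. Writing $\xi^{(n)}=\widetilde{\eta}^{(n)}-\widetilde{\eta}^{(n-1)}$ as in the proof of Theorem \ref{mainthm2}, (\ref{iterativeRed1}) gives $\xi^{(n+1)}=-\mathcal{K}_1K_2\,\xi^{(n)}\otimes\widetilde{\eta}^{(1)}$, and since $\widetilde{\eta}^{(1)}=\mathcal{K}_1\phi$, iterating downward yields the closed form
\be
\xi^{(n+1)}=(-1)^{n}\,(\mathcal{K}_1K_2)\big((\mathcal{K}_1K_2)(\cdots)\otimes\mathcal{K}_1\phi\big)\otimes\mathcal{K}_1\phi,
\ee
i.e. $n$ nested applications of $\mathcal{K}_1K_2$ each tensored with a copy of $\mathcal{K}_1\phi$. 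Next I would compare this with $\widetilde{\eta}_{n+1}=\widetilde{\mathcal{K}}_{n+1}\phi^{\otimes(n+1)}$. Plugging the explicit product from Proposition \ref{Kappa1K2otimesKappa1K1} into $\phi^{\otimes(n+1)}$, each trailing factor $(\mathcal{K}_1K_1)^{\otimes k}$ hits a block $(\mathcal{K}_1\phi)^{\otimes k}$, and by Example 5.8 (i.e. $\mathcal{K}_1K_1\simeq I$) we may drop all those factors up to $O(\epsilon)$; what survives is exactly the nested $\mathcal{K}_1K_2$ structure above acting on copies of $\mathcal{K}_1\phi$. Hence $\xi^{(n+1)}\simeq\widetilde{\eta}_{n+1}$, and then
\be
\widetilde{\eta}^{(n+1)}=\widetilde{\eta}^{(n)}+\xi^{(n+1)}\simeq\sum_{j=1}^n\widetilde{\eta}_j+\widetilde{\eta}_{n+1}=\sum_{j=1}^{n+1}\widetilde{\eta}_j,
\ee
using the inductive hypothesis for the first summand.

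There is a subtlety I would be careful about: the iteration's increment $\xi^{(n+1)}$ is built from $\widetilde{\eta}^{(1)}=\mathcal{K}_1\phi$ only, whereas in the worked examples for $\widetilde{\eta}^{(3)},\widetilde{\eta}^{(4)}$ one sees terms like $\mathcal{K}_1K_2(\widetilde{\eta}_3\otimes\eta_1)$ — that is, the reduction $\widetilde{\eta}^{(n)}-\widetilde{\eta}^{(n-1)}\simeq\widetilde{\eta}_n$ must itself be propagated, not just asserted. So the cleanest induction is the joint statement ``$\widetilde{\eta}^{(n)}\simeq\sum_{j=1}^n\widetilde{\eta}_j$ \emph{and} $\widetilde{\eta}^{(n)}-\widetilde{\eta}^{(n-1)}\simeq\widetilde{\eta}_n$,'' proved together; the second clause is what feeds into the computation of $\xi^{(n+1)}=-S\xi^{(n)}=-\mathcal{K}_1K_2(\xi^{(n)}\otimes\widetilde{\eta}^{(1)})\simeq-\mathcal{K}_1K_2(\widetilde{\eta}_n\otimes\eta_1)$, which by the structure of $\widetilde{\mathcal{K}}_{n+1}$ (Proposition \ref{Kappa1K2otimesKappa1K1}, peeling off the outermost $\mathcal{K}_1K_2\otimes(\mathcal{K}_1K_1)^{\otimes(n-1)}$ and using $\mathcal{K}_1K_1\simeq I$) equals $\widetilde{\eta}_{n+1}$ up to $O(\epsilon)$.

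The main obstacle is bookkeeping rather than conceptual: one must verify that the finitely many $O(\epsilon)$ errors introduced at each of the $n$ reduction steps — and the error already carried in the inductive hypothesis — accumulate only to $O(\epsilon)$ and not to something like $n\epsilon$ that degrades with $n$. Because $\|\mathcal{K}_1K_2\|\le(1-1/b)/h<1/h$ and $\|\mathcal{K}_1\phi\|=h$, the operator $S$ is a strict contraction, so the tail $\sum_{k>n}\xi^{(k)}$ is geometrically small and the per-step errors are damped by the same contraction factor; summing a geometric series of $O(\epsilon)$ terms still gives $O(\epsilon)$ with a constant independent of $n$. I would make this quantitative by tracking the error constant through the recursion $\widetilde{\mathcal{K}}_j=-(\widetilde{\mathcal{K}}_{j-1}K_2\otimes K_1^{\otimes(j-2)})\mathcal{K}_1^{\otimes j}$ and invoking the norm bound $\|\widetilde{\mathcal{K}}_n\|\le\mu^{-1}(\mu/\nu)^n(\nu\|\mathcal{K}_1\|)^{n(n+1)/2}$ from the proof of the Proposition preceding Theorem \ref{relation_ite_rIBS}, which shows the relevant series converges under the stated hypotheses; this justifies interchanging the limit in $n$ with the $\simeq$ relation and completes the induction.
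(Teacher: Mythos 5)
Your proposal is correct and follows essentially the same route as the paper: an induction in which Proposition \ref{Kappa1K2otimesKappa1K1} and the relation $\mathcal{K}_1K_1\simeq I$ are used to peel off the outermost factor and identify the iteration increment $\widetilde{\eta}^{(n)}-\widetilde{\eta}^{(n-1)}$ with $\widetilde{\eta}_n$. Your explicit formulation of the joint inductive hypothesis (the sum statement together with the increment statement) and your remark on controlling the accumulation of the $O(\epsilon)$ errors make precise two points that the paper's own proof uses only implicitly.
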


\begin{proof}
We have by definition
\be
\widetilde{\eta}^{(1)}=\mathcal{K}_1\phi=\eta_1.
\ee

Suppose for some $n\ge2$,
\be
\widetilde{\eta}^{(n-1)}\simeq\sum_{j=1}^{n-1}\widetilde{\eta}_j.
\ee
Using Proposition \ref{Kappa1K2otimesKappa1K1},
\ba
\widetilde{\eta}_n
&=
\widetilde{\mathcal{K}}_n\phi^n
\\
&\simeq
(-1)^{n-1}\mathcal{K}_1K_2\left(\mathcal{K}_1K_2\otimes\mathcal{K}_1K_1\right)\left(\mathcal{K}_1K_2\otimes\mathcal{K}_1K_1\otimes\mathcal{K}_1K_1\right)
\\
&\cdots
\left(\mathcal{K}_1K_2\otimes(\mathcal{K}_1K_1)^{\otimes(n-2)}\right)
\left(\mathcal{K}_1\phi\right)^{\otimes n}
\\
&\simeq
(-1)^{n-1}\mathcal{K}_1K_2\left(\mathcal{K}_1K_2\otimes\mathcal{K}_1K_1\right)\left(\mathcal{K}_1K_2\otimes\mathcal{K}_1K_1\otimes\mathcal{K}_1K_1\right)
\\
&\cdots
\left(\mathcal{K}_1K_2\otimes(\mathcal{K}_1K_1)^{\otimes(n-3)}\right)
\left[
\left(\mathcal{K}_1K_2\otimes(\mathcal{K}_1K_1)^{\otimes(n-3)}\right)
\left(\mathcal{K}_1\phi\right)^{\otimes (n-1)}\right]
\otimes\left(\mathcal{K}_1\phi\right)
\\
&\simeq
(-1)^{n-1}\mathcal{K}_1K_2\left(\mathcal{K}_1K_2\otimes\mathcal{K}_1K_1\right)\left(\mathcal{K}_1K_2\otimes\mathcal{K}_1K_1\otimes\mathcal{K}_1K_1\right)
\cdots
\left(\mathcal{K}_1K_2\otimes(\mathcal{K}_1K_1)^{\otimes(n-4)}\right)
\\
&\cdots
\left(\mathcal{K}_1K_2\otimes(\mathcal{K}_1K_1)^{\otimes(n-4)}\right)
\left[
\left(\mathcal{K}_1K_2\otimes(\mathcal{K}_1K_1)^{\otimes(n-3)}\right)
\left(\mathcal{K}_1\phi\right)^{\otimes (n-1)}\right]
\otimes\left(\mathcal{K}_1\phi\right)
\\
&\simeq
(-1)^{n-1}\mathcal{K}_1K_2\Biggl[
\mathcal{K}_1K_2\left(\mathcal{K}_1K_2\otimes\mathcal{K}_1K_1\right)\cdots
\left(\mathcal{K}_1K_2\otimes(\mathcal{K}_1K_1)^{\otimes(n-3)}\right)
\left(\mathcal{K}_1\phi\right)^{\otimes (n-1)}\Biggr]
\otimes\left(\mathcal{K}_1\phi\right)
\\
&\simeq
-\mathcal{K}_1K_2\widetilde{\eta}_{n-1}\otimes\left(\mathcal{K}_1\phi\right)
\\
&\simeq
-\mathcal{K}_1K_2\left(\eta_r^{(n-1)}-\eta_r^{(n-2)}\right)\otimes\eta_r^{(1)}.
\ea
Hence,
\be
\widetilde{\eta}_n\simeq\widetilde{\eta}^{(n)}-\widetilde{\eta}^{(n-1)}.
\ee
The proof is complete by mathematical induction.
\end{proof}

\section{Revisiting inverse operators}
\label{sec:reduce_2}

Hoskins and Schotland substituted (\ref{invBorn}) into (\ref{Born}), and found \cite{Hoskins-Schotland22}
\be
\phi=K_1\left(\mathcal{K}_1\phi+\mathcal{K}_2\phi^{\otimes2}+\cdots\right)
+K_2\left(\mathcal{K}_1\phi+\mathcal{K}_2\phi^{\otimes2}+\cdots\right)\otimes\left(\mathcal{K}_1\phi+\mathcal{K}_2\phi^{\otimes2}+\cdots\right)+\cdots,
\ee
By equating terms of the same order in $\phi$ on both sides, we obtain
\ba
&
K_1\mathcal{K}_1=I,
\\
&
K_1\mathcal{K}_2+K_2(\mathcal{K}_1\otimes\mathcal{K}_1)=0,
\ea
and in general,
\be
K_1\mathcal{K}_j+\sum_{m=2}^jK_m\sum_{i_1+\cdots+i_m=j}\mathcal{K}_{i_1}\otimes\cdots\otimes\mathcal{K}_{i_m}=0,
\quad j\ge2.
\ee
Thus we arrive at
\begin{equation}
\mathcal{K}_j\simeq
-\mathcal{K}_1\sum_{m=2}^jK_m\sum_{i_1+\cdots+i_m=j}\mathcal{K}_{i_1}\otimes\cdots\otimes\mathcal{K}_{i_m},
\quad j\ge2.
\label{arriveat_2}
\end{equation}
For example, we have (\ref{p6eq1}) and
\be
\mathcal{K}_3\simeq
-\mathcal{K}_1K_2\left(\mathcal{K}_1\otimes\mathcal{K}_2+\mathcal{K}_2\otimes\mathcal{K}_1\right)
-\mathcal{K}_1K_3\mathcal{K}_1^{\otimes3}.
\ee

The lemma below means that only one term on the right-hand side of (\ref{arriveat_2}) essentially contributes.

\begin{lem}
For $j\ge2$,
\begin{equation}
\mathcal{K}_j\simeq
-\mathcal{K}_1K_2\left(\mathcal{K}_{j-1}\otimes\mathcal{K}_1\right).
\label{reduction_2}
\end{equation}
\label{dominant_2}
\end{lem}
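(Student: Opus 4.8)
The plan is to prove \eqref{reduction_2} by induction on $j$, exactly in the spirit of the proof of Lemma \ref{dominant} but now using the alternative recursion \eqref{arriveat_2} rather than \eqref{arriveat}. The base case $j=2$ is immediate: \eqref{arriveat_2} gives $\mathcal{K}_2=-\mathcal{K}_1K_2(\mathcal{K}_1\otimes\mathcal{K}_1)$, which is exactly \eqref{reduction_2} for $j=2$ with equality (not merely $\simeq$). So assume $j\ge 3$ and that \eqref{reduction_2} holds for all indices between $2$ and $j-1$. Starting from \eqref{arriveat_2}, I would isolate the $m=2$ term and within it the single composition $(i_1,i_2)=(j-1,1)$, writing
\ba
\mathcal{K}_j
&\simeq
-\mathcal{K}_1K_2\left(\mathcal{K}_{j-1}\otimes\mathcal{K}_1\right)
-\mathcal{K}_1K_2\!\!\sum_{\substack{i_1+i_2=j\\ i_2\ge 2}}\!\!\mathcal{K}_{i_1}\otimes\mathcal{K}_{i_2}
-\mathcal{K}_1\sum_{m=3}^j K_m\!\!\sum_{i_1+\cdots+i_m=j}\!\!\mathcal{K}_{i_1}\otimes\cdots\otimes\mathcal{K}_{i_m}.
\ea
The claim then reduces to showing that the last two groups of terms are each $O(\epsilon)$ relative to the first, i.e. they vanish under $\simeq$.

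The mechanism for absorbing the extra terms is Lemma \ref{generalKjn}, which says $K_{p+1}\xi_1\otimes\cdots\otimes\xi_p\otimes(\mathcal{K}_1K_q\,\xi_{p+1}\otimes\cdots)\simeq K_{p+q}\xi_1\otimes\cdots$, together with the fact that $\mathcal{K}_1K_1\simeq I$. Concretely, for a term with $m\ge 3$, I would use the inductive hypothesis to replace each inner $\mathcal{K}_{i_\ell}$ (for $i_\ell\ge 2$) by $-\mathcal{K}_1K_2(\mathcal{K}_{i_\ell-1}\otimes\mathcal{K}_1)$, and for $i_\ell=1$ note $\mathcal{K}_1=\mathcal{K}_1$ appears as is; then the rightmost $\mathcal{K}_1$ of each such block meets a $K_m$ slot and, via Lemma \ref{generalKjn} applied repeatedly, the compositions telescope so that the whole $m$-fold sum collapses (mod $\epsilon$) into the $m=2$ contribution. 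In other words, the combinatorial identity underlying \eqref{arriveat_2} is that summing over all partitions with $m\ge 3$ reproduces, up to $O(\epsilon)$, the partitions already counted at $m=2$ but with the ``wrong'' split $(i_1,i_2)$ with $i_2\ge 2$, and those in turn telescope down to a single surviving block $(\mathcal{K}_{j-1}\otimes\mathcal{K}_1)$ because repeatedly peeling off a trailing $\mathcal{K}_1K_2$ via \eqref{reduction_2} at lower order and collapsing via Lemma \ref{generalKjn} leaves only the top composition. This is precisely the bookkeeping carried out in \eqref{relation1inSec6}--\eqref{supplsimeq} of the proof of Lemma \ref{dominant}, and I would reuse that argument almost verbatim, with $K_1^{\otimes(\cdot)}$ slots replaced by $\mathcal{K}_1^{\otimes(\cdot)}$ slots on the other side.

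The main obstacle, as in Lemma \ref{dominant}, is the partition bookkeeping: one must show that every composition of $j$ with $m\ge 2$ parts other than $(j-1,1)$ contributes only at order $\epsilon$, and this requires an auxiliary telescoping claim analogous to \eqref{relation1inSec6} that is itself proved by a nested induction on the number of parts $m$. Care is needed because each application of the inductive hypothesis \eqref{reduction_2} introduces a fresh $\mathcal{K}_1K_1\simeq I$ factor, so one accumulates $O(m)$ error terms of size $\epsilon$; since $m\le j$ is fixed, the total error is still $O(\epsilon)$, but the estimate $\|\mathcal{K}_1K_1-I\|=\epsilon$ must be invoked to keep the constants under control. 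Once that telescoping lemma is in place, the collapse to $-\mathcal{K}_1K_2(\mathcal{K}_{j-1}\otimes\mathcal{K}_1)$ is formal, and the induction closes.
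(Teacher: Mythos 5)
Your proposal matches the paper's proof: both induct on $j$ starting from the recursion \eqref{arriveat_2}, isolate the composition $(j-1,1)$ in the $m=2$ term, and kill every other composition by a telescoping cancellation across $m=2,3,\dots,j$ driven by the inductive hypothesis together with Lemma \ref{generalKjn} and $\mathcal{K}_1K_1\simeq I$, exactly mirroring the bookkeeping of Lemma \ref{dominant}. The only cosmetic difference is that the paper expands only the trailing factor $\mathcal{K}_{i_m}$ at each stage of the telescope (the case Lemma \ref{generalKjn} literally covers) rather than every inner factor as you suggest, but the argument is otherwise the one you describe.
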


\begin{proof}
Similar to Lemma \ref{dominant}, the proof can be done with mathematical induction. When $j=2$, we have
\be
\mathcal{K}_2\simeq-\mathcal{K}_1K_2\mathcal{K}_1\otimes\mathcal{K}_1.
\ee

Suppose the relation (\ref{reduction_2}) holds for $j\ge2$. Then we have
\be
-\mathcal{K}_1K_2\sum_{i_1+i_2=j}\mathcal{K}_{i_1}\otimes\mathcal{K}_{i_2+1}-
\mathcal{K}_1\sum_{m=3}^{j+1}K_m\sum_{i_1+\cdots+i_m=j+1}\mathcal{K}_{i_1}\otimes\cdots\otimes\mathcal{K}_{i_m}
\simeq0.
\ee
Using Lemma \ref{generalKjn}, the above relation can be shown as follows:
\ba
&
-\mathcal{K}_1K_2\sum_{i_1+i_2=j}\mathcal{K}_{i_1}\otimes\mathcal{K}_{i_2+1}-
\mathcal{K}_1\sum_{m=3}^{j+1}K_m\sum_{i_1+\cdots+i_m=j+1}\mathcal{K}_{i_1}\otimes\cdots\otimes\mathcal{K}_{i_m}
\\
&\simeq
\mathcal{K}_1K_2\sum_{i_1+i_2=j}\mathcal{K}_{i_1}\otimes\mathcal{K}_1K_2(\mathcal{K}_{i_2}\otimes\mathcal{K}_1)-
\mathcal{K}_1\sum_{m=3}^{j+1}K_m\sum_{i_1+\cdots+i_m=j+1}\mathcal{K}_{i_1}\otimes\cdots\otimes\mathcal{K}_{i_m}
\\
&\simeq
\mathcal{K}_1K_3\sum_{i_1+i_2=j}\mathcal{K}_{i_1}\otimes\mathcal{K}_{i_2}\otimes\mathcal{K}_1-
\mathcal{K}_1\sum_{m=3}^{j+1}K_m\sum_{i_1+\cdots+i_m=j+1}\mathcal{K}_{i_1}\otimes\cdots\otimes\mathcal{K}_{i_m}
\\
&\simeq
-\mathcal{K}_1K_3\sum_{i_1+i_2+i_3=j}\mathcal{K}_{i_1}\otimes\mathcal{K}_{i_2}\otimes\mathcal{K}_{i_3+1}
-\mathcal{K}_1\sum_{m=4}^{j+1}K_m\sum_{i_1+\cdots+i_m=j+1}\mathcal{K}_{i_1}\otimes\cdots\otimes\mathcal{K}_{i_m}
\\
&\simeq
-\mathcal{K}_1K_j\mathcal{K}_1\otimes\cdots\otimes\mathcal{K}_1\otimes\mathcal{K}_2
-\mathcal{K}_1K_{j+1}\mathcal{K}_1^{\otimes(j+1)}
\\
&\simeq
\mathcal{K}_1K_j\mathcal{K}_1\otimes\cdots\otimes\mathcal{K}_1\otimes\mathcal{K}_1K_2(\mathcal{K}_1\otimes\mathcal{K}_1)
-\mathcal{K}_1K_{j+1}\mathcal{K}_1^{\otimes(j+1)}
\\
&\simeq0.
\ea
Let us write (\ref{arriveat_2}) for $j+1$:
\ba
\mathcal{K}_{j+1}
&\simeq
-\mathcal{K}_1\sum_{m=2}^{j+1}K_m\sum_{i_1+\cdots+i_m=j+1}\mathcal{K}_{i_1}\otimes\cdots\otimes\mathcal{K}_{i_m}
\\
&\simeq
-\mathcal{K}_1K_2\sum_{i_1+i_2=j+1}\mathcal{K}_{i_1}\otimes\mathcal{K}_{i_2}
-\mathcal{K}_1\sum_{m=3}^{j+1}K_m\sum_{i_1+\cdots+i_m=j+1}\mathcal{K}_{i_1}\otimes\cdots\otimes\mathcal{K}_{i_m}
\\
&\simeq
-\mathcal{K}_1K_2\mathcal{K}_j\otimes\mathcal{K}_1
-\mathcal{K}_1K_2\sum_{i_1+i_2=j}\mathcal{K}_{i_1}\otimes\mathcal{K}_{i_2+1}
-\mathcal{K}_1\sum_{m=3}^{j+1}K_m\sum_{i_1+\cdots+i_m=j+1}\mathcal{K}_{i_1}\otimes\cdots\otimes\mathcal{K}_{i_m}
\\
&\simeq
-\mathcal{K}_1K_2\mathcal{K}_j\otimes\mathcal{K}_1.
\ea
Thus the proof is complete.
\end{proof}

Let us recursively define
\begin{equation}
\widehat{\mathcal{K}}_j
=-\mathcal{K}_1K_2\left(\widehat{\mathcal{K}}_{j-1}\otimes\widehat{\mathcal{K}}_1\right),\quad j\ge3,
\label{iteHS1}
\end{equation}
where $\widehat{\mathcal{K}}_1=\mathcal{K}_1$, $\widehat{\mathcal{K}}_2=\mathcal{K}_2$. Thus we can introduce another reduced inverse Born series as
\begin{equation}
\widehat{\eta}=
\widehat{\eta}_1+\widehat{\eta}_2+\widehat{\eta}_3+\widehat{\eta}_4+\cdots,
\label{rIBS2}
\end{equation}
where
\be
\widehat{\eta}_j=\widehat{\mathcal{K}}_j\phi^{\otimes j},\quad j\ge1.
\ee
For example, $\widehat{\eta}_1=\eta_1$, $\widehat{\eta}_2=\eta_2$,
\ba
\widehat{\eta}_3
&=
-\mathcal{K}_1K_2\left(\widehat{\mathcal{K}}_2\phi^{\otimes2}\otimes\mathcal{K}_1\phi\right)
=
\mathcal{K}_1K_2\left(\mathcal{K}_1K_2(\eta_1\otimes\eta_1)\otimes\eta_1\right),
\\
\widehat{\eta}_4
&=
-\mathcal{K}_1K_2\left(\widehat{\mathcal{K}}_3\phi^{\otimes3}\otimes\mathcal{K}_1\phi\right)
=
-\mathcal{K}_1K_2\left[
\mathcal{K}_1K_2\left(\mathcal{K}_1K_2(\eta_1\otimes\eta_1)\otimes\eta_1\right)\otimes\eta_1
\right].
\ea

From (\ref{iteHS1}), we have
\ba
\widetilde{\eta}^{(2)}&=\widehat{\eta}_1+\widehat{\eta}_2,
\\
\widetilde{\eta}^{(3)}&=
\widehat{\eta}_1+\widehat{\eta}_2-\mathcal{K}_1(K_2\widehat{\eta}_2\otimes\widehat{\eta}_1)
=
\widehat{\eta}_1+\widehat{\eta}_2+\widehat{\eta}_3
\\
\widetilde{\eta}^{(4)}&=
\widehat{\eta}_1+\widehat{\eta}_2+\widehat{\eta}_3-\mathcal{K}_1K_2\widehat{\eta}_3\otimes\eta_1
\\
&=
\widehat{\eta}_1+\widehat{\eta}_2+\widehat{\eta}_3+\widehat{\eta}_4.
\ea
Thus, the iterative method (\ref{iterativeRed1}) is equivalent to this reduced inverse Born series (\ref{rIBS2}).

\section{Numerical tests}
\label{sec:num}

\subsection{A simple invertible case}

If $K_1$ has an inverse and $\mathcal{K}_1K_1=I$, terms in the fast iterative scheme (\ref{iterativeRed1}) and in the inverse Born series (\ref{invBorn}) coincide. To demonstrate this, we consider a simple series:
\be
y=\sum_{n=1}^{\infty}y_n,\quad y\in\Rm^2,
\ee
where $y_1=Ax$ with $x\in\Rm^2$. Here $y_n$'s are recursively given by
\be
\{y_n\}_i=-\sum_{j=1}^2A_{ij}x_j\{y_{n-1}\}_j,\quad i=1,2.
\ee
Let us set
\be
A=\begin{pmatrix}0.1&0.2\\0.3&0.4\end{pmatrix},\quad
x=\begin{pmatrix}0.07\\0.08\end{pmatrix}.
\ee
We note that the matrix $A$ is invertible:
\be
A^{-1}=\begin{pmatrix}-20&10\\15&-5\end{pmatrix}.
\ee

With five significant digits, we have
\be
y=y_1+y_2+y_3+y_4+y_5=\begin{pmatrix}0.022031\\ 0.050908\end{pmatrix}.
\ee
This $y$ can be regarded as observation data.

According to (\ref{iterativeRed1}), we have $x^{(0)}=(0,0)^T$, $x^{(1)}=A^{-1}y$,
\be
x^{(n+1)}=x^{(n)}-A^{-1}K_2\left(x^{(n)}-x^{(n-1)}\right)\otimes x^{(1)},\quad n\ge1.
\ee
Here,
\be
\left\{K_2\left(x^{(n)}-x^{(n-1)}\right)\otimes x^{(1)}\right\}_i
=-\sum_{j=1}^2A_{ij}\left(x_j^{(n)}-x_j^{(n-1)}\right)\sum_{l=1}^2A_{jl}x_l^{(1)},\quad i\ge2,\quad n\ge1.
\ee
We obtain
\ba
x^{(1)}&=(0.0684578, 0.0759273)^T,
\\
x^{(2)}&=(0.0699660, 0.0797927)^T,
\\
x^{(3)}&=(0.0699993, 0.0799895)^T,
\\
x^{(4)}&=(0.0700000, 0.0799995)^T,
\\
x^{(5)}&=(0.0700000, 0.0800000)^T.
\ea

According to (5.2), we have $x_1=A^{-1}y$ and $x_j=\mathcal{K}_jy^{\otimes j}$ ($j\ge2$), where
\be
\mathcal{K}_jy^{\otimes j}=-\left(\sum_{m=1}^{j-1}\mathcal{K}_m\sum_{i_1+\cdots+i_m=j}K_{i_1}\otimes\cdots\otimes K_{i_m}\right)x_1^{\otimes j},
\quad j\ge2.
\ee
Here, $K_1x=Ax$,
\be
K_n\xi^{\otimes n}=-\sum_{l=1}^2A_{nl}\xi_l\left\{K_{n-1}\xi^{\otimes(n-1)}\right\}_l,\quad
\xi\in\Rm^2\quad n\ge2.
\ee
We obtain
\ba
x_1&=(0.0684578, 0.0759273)^T,
\\
x_1+x_2&=(0.0699660, 0.0797927)^T,
\\
x_1+x_2+x_3&=(0.0699993, 0.0799895)^T,
\\
x_1+x_2+x_3+x_4&=(0.0700000, 0.0799995)^T,
\\
x_1+x_2+x_3+x_4+x_5&=(0.0700000, 0.0800000)^T.
\ea
Thus, the inverse Born series (\ref{invBorn}) can be computed with the iteration (\ref{iterativeRed1}).

\subsection{Radial problem: Setup}

We consider the two-dimensional radial problem with spatially-oscillating illumination \cite{Machida23}. Let $\Omega$ be a disk of radius $R$ centered at the origin. We write $\eta=\eta(r)$, where $r$ is the radial coordinate. Let us set
\be
\eta(r)=\left\{\begin{aligned}
\eta_a,&\quad 0\le r\le a,
\\
0,&\quad a<r< R.
\end{aligned}\right.
\ee
We can write $x=(r,\theta)$, $\theta\in\Sm^1$. Assuming a spatially-oscillating source, the diffusion equation is given by
\begin{equation}
\begin{aligned}
-\Delta u(x)+k^2\left(1+\eta(r)\right)u(x)=\frac{e^{im\theta}}{r}\delta(r-R),&\quad x\in\Omega,
\\
\pp_{\nu}u+\beta u=0,&\quad x\in\pp\Omega
\end{aligned}
\label{num:diffeq}
\end{equation}
for $m=1,\dots,M_S$. We put $\alpha_0=k^2$ for later convenience.

Let us consider the Green's function which satisfies
\begin{equation}
\left\{\begin{aligned}
-\Delta G(x,x')+k^2G(x,x')=\delta(x-x'),&\quad x\in\Omega,
\\
\pp_{\nu}G+\beta G=0,&\quad x\in\pp\Omega.
\end{aligned}\right.
\label{num:Gfunc}
\end{equation}
We obtain
\be
G(x,x')=\frac{1}{2\pi}\sum_{m=-\infty}^{\infty}e^{im(\theta-\theta')}g_m(r,r'),
\ee
where $x'=(r',\theta')$, $\theta'\in\Sm^1$. Here, $g_m(r,r')$ satisfies
\ba
r^2\pp_r^2g_m(r,r')+r\pp_rg_m(r,r')-\left(k^2r^2+m^2\right)g_m(r,r')
&=
-r\delta(r-r'),
\\
\pp_rg_m(R,r')+\beta g_m(R,r')=0.
\ea
Let $I_m,K_m$ be the modified Bessel functions of the first and second kinds, respectively. Furthermore, $I'_m,K'_m$ are derivatives of $I_m,K_m$. We obtain
\ba
g_m(r,r')
&=
K_m\left(k\max(r,r')\right)I_m\left(k\min(r,r')\right)
\\
&-
\frac{\beta K_m(kR)+k K'_m(kR)}{\beta I_m(kR)+k I'_m(kR)}I_m(kr)I_m(kr').
\ea
We note that the relation $g_{-m}(r,r')=g_m(r,r')$. If $x\in\overline{\Omega}$, $x'\in\pp\Omega$, the function $g_m$ can be expressed as
\be
g_m(r,R)=K_m(kR)I_m(kr)-d_mI_m(kr),
\ee
where
\be
d_m=\frac{\beta K_m(kR)+k K'_m(kR)}{\beta I_m(kR)+k I'_m(kR)}I_m(kR),
\ee
or
\ba
g_m(r,R)
&=
\frac{kK_m(kR)I'_m(kR)-kK'_m(kR)I_m(kR)}{\beta I_m(kR)+kI'_m(kR)}I_m(kr)
\\
&=
\frac{I_m(kr)}{R\left(\beta I_m(kR)+k I'_m(kR)\right)}
=:\widetilde{g}_m(r).
\ea

\subsection{Radial problem: Forward data}

Define
\be
\omega=\left\{x;\;r_x\le a\right\}.
\ee
Let us consider the Green's function $G_a$ which corresponds to the diffusion equation (\ref{num:diffeq}). We can write $G_a=v$ in $\omega$ and $G_a=w$ in $\Omega$. Here,
\ba
-\Delta v+\left(1+\eta_a\right)k^2v=\delta(x-x'),&\quad x\in\omega,
\\
-\Delta w+k^2w=0,&\quad x\in\Omega\setminus\omega.
\ea
On the interface and boundary, we have
\begin{equation}\left\{\begin{aligned}
v=w,&\quad x\in\pp\omega,
\\
\pp_{\nu}v=\pp_{\nu}w,&\quad x\in\pp\omega,
\\
\pp_{\nu}w+\beta w=0,&\quad x\in\pp\Omega.
\end{aligned}\right.
\label{intbd}
\end{equation}

The solutions $v,w$ can be written as
\be
v(x)=\frac{1}{2\pi}\sum_{m=-\infty}^{\infty}a_me^{im(\theta-\theta')}
I_m\left(\sqrt{1+\eta_a}kr\right),\quad x\in\omega,
\ee
\be
w(x)=
G_0(x,x')+\frac{1}{2\pi}\sum_{m=-\infty}^{\infty}e^{im(\theta-\theta')}
\left(b_mK_m(kr)+c_mI_m(kr)\right),\quad x\in\Omega\setminus\omega,
\ee
and $a_m,b_m,c_m$ will be determined using (\ref{intbd}). Here, $G_0(x,x')$ satisfies
\be
-\Delta G_0(x,x')+k^2G_0(x,x')=\delta(x-x'),\quad x\in\Rm^2,
\ee
where $G_0\to0$ as $|x|\to\infty$. We obtain
\be
G_0(x,x')=\frac{1}{2\pi}\sum_{m=-\infty}^{\infty}e^{im(\theta-\theta')}I_m\left(kr\right)K_m(kR).
\ee
The coefficients $a_m,b_m,c_m$ are solutions to the following linear system \cite{Moskow-Schotland09}:
\ba
&
\begin{pmatrix}
I_m(\sqrt{1+\eta_a}ka) & -K_m(ka) & -I_m(ka) \\
\sqrt{1+\eta_a}kI'_m(\sqrt{1+\eta_a}ka) & -kK'_m(ka) & -kI'_m(ka) \\
0 & \beta K_m(kR)+kK'_m(kR) & \beta I_m(kR)+kI'_m(kR)
\end{pmatrix}
\begin{pmatrix}
a_m \\ b_m \\ c_m
\end{pmatrix}
\\
&=
\begin{pmatrix}
I_m(ka)K_m(kR) \\ kI'_m(ka)K_m(kR) \\ kI_m(kR)K'_m(kR)+\beta I_m(kR)K_m(kR)
\end{pmatrix}.
\ea
We obtain
\ba
u(x)
&=
\int_0^{2\pi}\int_0^RG_a(x,x')e^{im\theta'}\,dr'd\theta'
\\
&=Re^{im\theta}\left(I_m(kR)K_m(kR)+b_mK_m(kR)+c_mI_m(kR)\right).
\ea
We have
\be
u_0(x)=\int_0^{2\pi}\int_0^RG(x,x')\frac{e^{im\theta'}}{r'}\delta(r'-R)r'\,dr'd\theta'
=e^{im\theta}g_m(r,R)=e^{im\theta}\widetilde{g}_m(r).
\ee

We observe $u,u_0$ at $r=R$, $\theta=0$ (i.e., $x=(R,0)$) for the forward data:
\be
\phi(m)=g_m(R,R)-R\left(I_m(kR)K_m(kR)+b_mK_m(kR)+c_mI_m(kR)\right).
\ee

\subsection{Radial problem: Reconstruction}

Let us begin with the following integral equation:
\be
u_0(x)-u(x)=k^2\int_{\Omega}G(x,x')\eta(r')u(x')\,dx',\quad x\in\overline{\Omega}.
\ee
We have
\ba
u_0(x)-u(x)
&=
k^2\int_{\Omega}G(x,x')\eta(r')u_0(x')\,dx'
\\
&-
k^4\int_{\Omega}\int_{\Omega}G(x,x')\eta(r')G(x',x'')\eta(r'')u_0(x'')\,dx'dx''+\cdots,
\ea
where $x''=(r'',\theta'')$. We can proceed as
\ba
&
u_0(x)-u(x)=
\frac{k^2}{(2\pi)^2}\sum_{m_1=-\infty}^{\infty}\sum_{m_2=-\infty}^{\infty}
\int_{\Omega}\int_{\Omega}e^{im_1(\theta-\theta')}e^{im_2(\theta'-\theta_0)}
\\
&\qquad\times
g_{m_1}(r,r')\eta(r')g_{m_2}(r',R)\frac{e^{im\theta_0}}{r_0}\,dx_0dx'
\\
&\qquad-
\frac{k^4}{(2\pi)^3}\sum_{m_1=-\infty}^{\infty}\sum_{m_2=-\infty}^{\infty}\sum_{m_3=-\infty}^{\infty}\int_{\Omega}\int_{\Omega}\int_{\Omega}
e^{im_1(\theta-\theta')}e^{im_2(\theta'-\theta'')}e^{im_3(\theta''-\theta_0)}
\\
&\qquad\times
g_{m_1}(r,r')\eta(r')g_{m_2}(r',r'')\eta(r'')g_{m_3}(r'',R)
\frac{e^{im\theta_0}}{r_0}\,dx_0dx'dx''+\cdots
\\
&=
k^2Re^{im\theta}\int_0^Rg_m(r,r')\eta(r')g_m(r',R)r'\,dr'
\\
&-
k^4Re^{im\theta}\int_0^R\int_0^R
g_m(r,r')\eta(r')g_m(r',r'')\eta(r'')g_m(r'',R)r'r''\,dr'dr''+\cdots.
\ea

By setting $r=R$, $\theta=0$, we obtain
\ba
\phi(m)
&=
k^2R\int_0^Rg_m(R,r')\eta(r')g_m(r',R)r'\,dr'
\\
&-
k^2\int_0^Rg_m(R,r')\eta(r')\left(k^2R\int_0^R
g_m(r',r'')\eta(r'')g_m(r'',R)r''\,dr''\right)r'\,dr'+\cdots
\\
&=
K_1\eta+K_2\eta\otimes\eta+\cdots.
\ea
Here,
\ba
(K_1\eta)(m)
&=
k^2R\int_0^Rg_m(r',R)r'g_m(r',R)\eta(r')\,dr',
\\
\left(K_j\eta^{\otimes j}\right)(m)
&=
(-1)^{j-1}k^{2j}R\int_0^R\cdots\int_0^Rg_m(r_1,R)\eta(r_1)g_m(r_1,r_2)\cdots
g_m(r_{j-1},r_j)
\\
&\times
\eta(r_j)g(r_j,R)r_1\cdots r_j\,dr_1\dots dr_j,\quad
j\ge2.
\ea
In particular,
\ba
\left(K_2\eta\otimes\eta\right)(m)
&=
-k^4R\int_0^R\int_0^Rg_m(r',R)\eta(r')g_m(r',r'')\eta(r'')g_m(r'',R)r'r''\,dr'dr''
\\
&=
k^2\int_0^Rg_m(r',R)\eta(r')u_1(r')r'\,dr',
\ea
where
\be
u_1(r)=-k^2R\int_0^Rg_m(r,r')r'g_m(r',R)\eta(r')\,dr'.
\ee

We note that
\be
\left(K_2\widetilde{\eta}\otimes\eta_1\right)(m)=
-k^4R\int_0^R\int_0^Rg_m(r',R)\widetilde{\eta}(r')g_m(r',r'')\eta_1(r'')g_m(r'',R)r'r''\,dr'dr''.
\ee
For numerical calculation, $r$ is discretized using the step size $\Delta r$ and $r_j$ is the $j$th position ($j=1,\dots,N_r$) of $r$. Let $\widetilde{\bv{\eta}}$ be a vector whose $j$th component is $\widetilde{\eta}(r_j)$. We use a matrix $\underline{\mathcal{K}}_1$ which corresponds to $\mathcal{K}_1$. In this radial problem, $\underline{\mathcal{K}}_1$ is an $N_r\times M_S$ matrix. After discritization, we can write $-\mathcal{K}_1(K_2\widetilde{\eta}\otimes\eta_1)$ in matrix-vector form as $B\widetilde{\bv{\eta}}$, where $B$ is an $N_r\times N_r$ square matrix:
\be
B_{ij}=k^4R(\Delta r)^d\sum_{m=1}^{M_S}\left\{\underline{\mathcal{K}}_1\right\}_{im}
g_m(r'_j,R)r'_j\int_0^Rg_m(r'_j,r'')\eta_1(r'')g_m(r'',R)r''\,dr'',
\ee
where $1\le i\le N_r$, $1\le j\le N_r$. Thus, the fast iteration can be written as
\begin{equation}
\widetilde{\bv{\eta}}^{(n+1)}=
\widetilde{\bv{\eta}}^{(n)}+B\left(\widetilde{\bv{\eta}}^{(n)}-\widetilde{\bv{\eta}}^{(n-1)}\right),
\quad n\ge1,
\label{iterativeRed2}
\end{equation}
with $\bv{\eta}^{(0)}=\bv{0}$, $\bv{\eta}^{(1)}=\underline{\mathcal{K}}_1\bv{\phi}$. To obtain $\underline{\mathcal{K}}_1$, we compute $K_1^+$ by singular value decomposition using the $M_S\times N_r$ matrix which corresponds to $K_1$, and calculate $\underline{\mathcal{K}}_1$ by discarding small singular values.

We set $M_S=N_r=90$ and used $23$ largest singular values. Furthermore, we set
\be
k=1,\quad R=3,\quad a=1.5,\quad \eta_a\equiv0.2,\quad \beta=3
\ee

Reconstructed $\eta$ for $\eta_a=0.2$ is shown in Fig.~\ref{fig1_02}. Using the fast iterative scheme in (\ref{iterativeRed1}), $\eta$ is reconstructed in the left panel of Fig.~\ref{fig1_02}. In Fig.~\ref{fig1_02}, the best reconstruction which can be achieved is $\eta_{\rm proj}$ in (\ref{etaproj}). To compare, the reconstructed $\eta$ by the inverse Born series (\ref{invBorn}) is shown in the right panel of Fig.~\ref{fig1_02}. In Fig.~\ref{fig1_04}, we performed reconstruction for $\eta_a=0.4$.

The computation time for Fig.~\ref{fig1_02} with the iteration (\ref{iterativeRed1}) was $1.7\,{\rm min}$, whereas the calculation for the inverse Born series (\ref{invBorn}) took $141.1\,{\rm min}$ (Mathematica on MacBook Pro). The difference in computation time grows exponentially if higher-order terms are added. Although the computation times are different, we can see that reconstructed curves are almost indistinguishable in Fig.~\ref{fig1_02}. Figure \ref{fig2}(Left) compares numerical results the fast iterative scheme in (\ref{iterativeRed1}) and the inverse Born series (\ref{invBorn}) for $\eta_a=0.2$. Quite similar reconstructions are obtained in Fig.~\ref{fig1_04} both for the fast iteration (\ref{iterativeRed1}) and the inverse Born series (\ref{invBorn}). In Fig.~\ref{fig2}(Right), the 5th-order approximations for the fast iteration (\ref{iterativeRed1}) and the inverse Born series (\ref{invBorn}) are slightly different.

\begin{figure}[ht]
\centering
\includegraphics[width=0.45\textwidth]{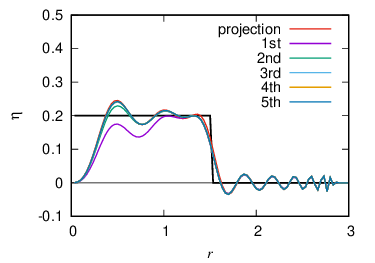}
\hspace{5mm}
\includegraphics[width=0.45\textwidth]{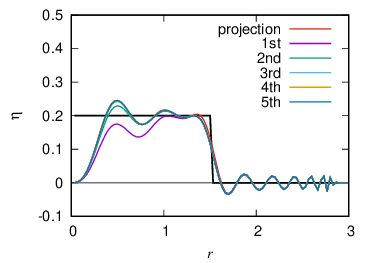}
\caption{
In the case of $\eta_a\equiv0.2$. (Left) The reconstructed $\eta$ by the fast iterative scheme (\ref{iterativeRed1}). (Right) The reconstructed $\eta$ by the inverse Born series (\ref{invBorn}). In both panels, the black line shows the true shape of the target and the red line is the best reconstruction given in (\ref{etaproj}); furthermore, the purple, green, light blue, ocher, and dark blue lines show 1st (linear), 2nd, 3rd, 4th, and 5th reconstructions, respectively.
}
\label{fig1_02}
\end{figure}

\begin{figure}[ht]
\centering
\includegraphics[width=0.45\textwidth]{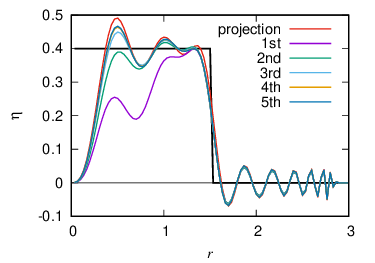}
\hspace{5mm}
\includegraphics[width=0.45\textwidth]{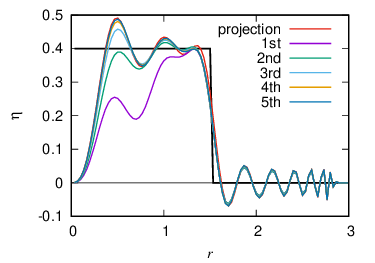}
\caption{
Same as Fig.~\ref{fig1_02} but $\eta_a\equiv0.4$.
}
\label{fig1_04}
\end{figure}

\begin{figure}[ht]
\centering
\includegraphics[width=0.45\textwidth]{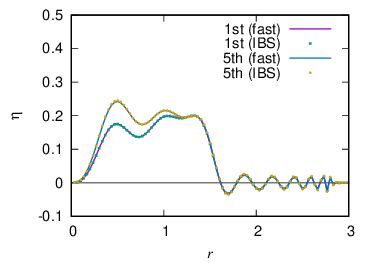}
\hspace{5mm}
\includegraphics[width=0.45\textwidth]{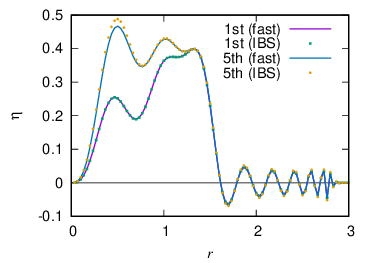}
\caption{
Comparison of the reconstructed $\eta$ by the fast iterative scheme (\ref{iterativeRed1}) and the inverse Born series (\ref{invBorn}). Results for (Left) $\eta_a=0.2$ and (Right) $\eta_a=0.4$ are shown. The purple and dark blue lines show reconstructions by the 1st- and 5th-order fast iterations, and green and ocher dots show reconstructions by the 1st- and 5th-order inverse Born series.
}
\label{fig2}
\end{figure}

\section{Concluding remarks}
\label{concl}

When the inverse Born series is derived, it is assumed that $\mathcal{K}_1K_1$ is close to the identity. This condition is achieved if regularization is done properly. In this paper, we showed that different expressions of a solution is possible under the condition. In particular in the proposed fast iterative scheme, all we need are $\mathcal{K}_1$ and $K_2$ to invert the Born series.




\end{document}